\documentclass[12pt]{amsart}

\usepackage{amsthm}
\usepackage{amsmath}
\usepackage{amsfonts}
\usepackage{amssymb}
\usepackage{tikz-cd}
\usepackage{graphicx}
\usepackage{float}

\usepackage{mathrsfs}

\def\int{{\sf int}}
\newcommand\cl{{\sf cl}}

\newcommand\up{{\uparrow}}
\newcommand\down{{\downarrow}}
\newcommand\Op{\mathcal{O}}

\newcommand{\T}{\mathscr T_2}

\newtheorem{theorem}{Theorem}[section]
\newtheorem{lemma}[theorem]{Lemma}

\newtheorem{corollary}[theorem]{Corollary}

\theoremstyle{definition}
\newtheorem{definition}[theorem]{Definition}

\newtheorem{remark}[theorem]{Remark}
\newtheorem{convention}[theorem]{Convention}

\setlength{\topmargin}{-0.5in}
\setlength{\textheight}{9in}
\setlength{\oddsidemargin}{0in}
\setlength{\evensidemargin}{0in}
\setlength{\textwidth}{6.5in}

\begin{document}

\title{The Frame of Nuclei of an Alexandroff Space}
\author{F. \'{A}vila, G.~Bezhanishvili, P.~J.~Morandi, A.~Zald{\'i}var}
\date{}

\subjclass[2010]{06D22; 06E15; 06A06; 06A05}
\keywords{Frame; locale, nucleus; Priestley space; Alexandroff space; partial order; total order; tree}

\begin{abstract}
Let $\Op S$ be the frame of open sets of a topological space $S$, and let $N(\Op S)$ be the frame of nuclei of $\Op S$. For an Alexandroff space $S$, we prove that $N(\Op S)$ is spatial iff the infinite binary tree $\T$ does not embed isomorphically into $(S, \le)$, where $\le$ is the
specialization preorder of $S$.
\end{abstract}

\maketitle

\section{Introduction}

Nuclei play an important role in pointfree topology as they characterize homomorphic images of frames (or dually sublocales of locales). For a frame
$L$, let $N(L)$ be the frame of nuclei of $L$, also known as the assembly of $L$. The frame $N(L)$ has been investigated by many authors; see, e.g.,
\cite{DP66, Isb72, Sim78, BM79, Sim80, Mac81, Joh82, NR87, Isb91, Wil94, Ple02, BG07, BGJ13, Sim14, BGJ16} (which are listed in chronological order).
For example, Beazer and Macnab \cite{BM79} gave a necessary and sufficient condition for $N(L)$ to be boolean; Niefield and Rosenthal \cite{NR87} gave
necessary and sufficient conditions for $N(L)$ to be spatial, and derived that if $N(L)$ is spatial, then so is $L$; Simmons \cite{Sim80} proved that
if $L$ is the frame of opens of a $T_0$-space $S$, then $N(L)$ is boolean iff $S$ is scattered; and Isbell \cite{Isb91} proved that if $L$ is the
frame of opens of a sober space $S$, then $N(L)$ is spatial iff $S$ is weakly scattered (see Section~2 for definitions).

In \cite{BG07} the study of $N(L)$ using the spectrum of $L$ was initiated. We utilized this approach in \cite{ABMZ18a} to generalize the results
mentioned above (and also to give alternate proofs of these results). One of the main results of \cite{ABMZ18a} gives a necessary and sufficient
condition for $N(L)$ to be spatial in terms of the spectrum of $L$, from which it is derived that if $L=\Op S$ is the frame of opens of a topological
space $S$, then $N(L)$ is spatial iff the soberification of $S$ is weakly scattered.

In the present paper we restrict our attention to Alexandroff
spaces (in which each point has a least neighborhood). It is well known that Alexandroff spaces correspond to preordered sets, and Alexandroff
$T_0$-spaces to partially ordered sets. Thus, the frame of opens of an Alexandroff space $S$ is isomorphic to the frame of upward closed sets of
a preordered set. We prove that for an Alexandroff $T_0$-space $S$, the frame $N(\Op S)$ is spatial iff the infinite binary tree $\T$ is not
isomorphic to a subposet of $S$. From this we derive that for an arbitrary Alexandroff space $S$, the frame $N(\Op S)$ is spatial iff the infinite
binary tree $\T$ does not embed isomorphically into $S$.


We point out that if $S$ is a poset, then Simmons's characterization of when $N(\Op S)$ is boolean
takes on the following form:
$N(\Op S)$ is boolean iff $S$ is noetherian (has no infinite ascending chains). Since $S$ being
noetherian is equivalent to $S$ being sober,
Isbell's characterization of when $N(\Op S)$ is spatial for sober $S$
does not yield any examples of posets $S$ such that $N(\Op S)$ is
spatial but not boolean. Our main result yields many such examples. Indeed, it implies that if $S$ is a poset with no infinite antichains,
then $N(\Op S)$ is spatial. In particular, if $S$ is totally ordered, then $N(\Op S)$ is spatial.
Thus, each totally ordered set (or more generally a poset with no infinite antichains) that
is not noetherian yields an example of a spatial $N(\Op S)$ which is not boolean.

\section{Preliminaries}

\begin{definition}
For a frame $L$, let $X_L$ be the set of prime filters of $L$. We will refer to $X_L$ as the \emph{spectrum} of $L$.
\end{definition}
If $\le_L$ is the inclusion order, then $(X_L,\le_L)$ is a poset (partially ordered set). For $a\in L$, let
\[
\eta(a)=\{x\in X_L\mid a\in x\}.
\]
There are several topologies on $X_L$, two of which play an important role in our considerations. Define $\tau_L$ and $\pi_L$ on $X_L$ by letting
\[
\{\eta(a)\mid a\in L\} \ \mbox{ and } \ \{\eta(a)\setminus\eta(b)\mid a,b\in L\}
\]
be the bases for $\tau_L$ and $\pi_L$, respectively. It is well known that $\tau_L$ is a spectral topology (sober and coherent) and $\pi_L$ is the
patch topology of $\tau_L$, hence $\pi_L$ is a Stone topology (compact, Hausdorff, zero-dimensional).

The ordered space $(X_L,\pi_L,\le_L)$ is a \emph{Priestley space}; that is, a compact ordered space satisfying the \emph{Priestley separation axiom}:
$x\not\le_L y$ implies there is a clopen upset containing $x$ and missing $y$. When there is no danger of confusion, we will abbreviate
$(X_L,\pi_L,\le_L)$ by $X_L$. Since $L$ is a Heyting algebra, $X_L$ is in fact an \emph{Esakia space} (the downset of clopen is clopen).
In addition, since $L$ is complete, the closure of each open upset is a clopen upset. Such spaces are often referred to as
\emph{extremally order-disconnected Esakia spaces} (see, e.g., \cite[Sec.~3]{ABMZ18a} and the references therein).

For $A\subseteq X_L$ we recall that the upset ${\uparrow}A$ and the downset ${\downarrow}A$ are defined by
\begin{align*}
{\uparrow}A=\{x\in X_L\mid a\le x \mbox{ for some } a\in A\}, \\
{\downarrow}A=\{x\in X_L\mid x\le a \mbox{ for some } a\in A\}.
\end{align*}
It is well known that if $A$ is closed, then both ${\uparrow}A$ and ${\downarrow}A$ are closed.

The next definition originates in \cite{BG07}. The current terminology was given in \cite[Def.~4.1]{ABMZ18a}.
\begin{definition}
Let $L$ be a frame and $X_L$ its spectrum.
\begin{enumerate}
\item We call a closed subset $F$ of $X_L$ \emph{nuclear} provided ${\downarrow}(F\cap U)$ is clopen for each clopen $U$ of $X_L$.
\item Let $N(X_L)$
be the set of nuclear subsets of $X_L$.
\item If $F=\{x\}$ is nuclear, then we call $x$ a \emph{nuclear point}.
\item Let $Y_L$ be the subset of $X_L$ consisting of nuclear points of $X_L$.
\end{enumerate}
\end{definition}

\begin{theorem}
\cite[Thm.~30]{BG07}  Let $L$ be a frame and $X_L$ its spectrum. Then
$N(L)$ is dually isomorphic to $N(X_L)$.
\end{theorem}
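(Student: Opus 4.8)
The plan is to make the duality concrete by realizing each nucleus as a closed subset of the spectrum and conversely. Throughout I identify $L$ with the lattice of clopen upsets of $X_L$ via the isomorphism $\eta$, so that a nucleus $j$ may be viewed as an operator on clopen upsets. Recall that nuclei on $L$ are in bijection with frame quotients: a nucleus $j$ determines the quotient frame $L_j=\{a\in L:j(a)=a\}$ of fixed points, with $j\colon L\to L_j$ the quotient map, and the pointwise order on nuclei is the reverse of the inclusion order on the associated sublocales. Since a frame quotient is in particular a surjection of bounded distributive lattices, Priestley duality lets me view it as a closed subspace of $X_L$. This is the bridge I will exploit.

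First I would define $\Phi\colon N(L)\to N(X_L)$ on a nucleus $j$ by
\[
\Phi(j)=F_j:=\{x\in X_L : j(a)\in x\Rightarrow a\in x \text{ for all } a\in L\}.
\]
Since $a\le j(a)$ and each $x$ is an upset, the converse implication is automatic, so $F_j$ is exactly the set of $j$-saturated prime filters; a short check with idempotency shows these are precisely the filters $j^{-1}(P)$ for $P\in X_{L_j}$, so $F_j$ is the Priestley dual of the lattice surjection $j\colon L\to L_j$. That $F_j$ is closed is immediate, since $F_j=\bigcap_{a\in L}\big(\eta(a)\cup(X_L\setminus\eta(j(a)))\big)$ is an intersection of $\pi_L$-clopen sets. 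A direct computation with the defining implication shows $\Phi$ reverses order: if $j\le k$ then $F_k\subseteq F_j$, because $j(a)\le k(a)$ forces every $k$-saturated filter to be $j$-saturated.

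Next I would produce the inverse on $N(X_L)$. For a nuclear subset $F$, the restriction map $r\colon L\to\mathrm{ClopUp}(F)$, $V\mapsto V\cap F$, is a lattice surjection, and I would set $j_F=r^{*}r$, where $r^{*}$ is the right adjoint of $r$, sending a clopen upset $W=V_0\cap F$ of $F$ to the largest clopen upset of $X_L$ whose trace on $F$ lies in $W$, namely
\[
r^{*}(V_0\cap F)=X_L\setminus{\downarrow}\big(F\cap(X_L\setminus V_0)\big).
\]
Here the nuclear hypothesis is exactly what is needed: it guarantees that ${\downarrow}(F\cap U)$ is clopen for the clopen set $U=X_L\setminus V_0$, so that $r^{*}$ genuinely lands in clopen upsets. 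Granting this, $j_F$ is a nucleus almost formally: $r$ preserves finite meets and $r^{*}$, as a right adjoint, preserves all meets, so $j_F$ preserves finite meets; $a\le r^{*}r(a)$ is the unit of the adjunction; and $rr^{*}=\mathrm{id}$, because $r$ is onto, yields idempotency. Thus $\Psi(F)=j_F$ defines an order-reversing map $N(X_L)\to N(L)$.

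The substantive work, and what I expect to be the main obstacle, is twofold: showing that $\Phi$ actually lands in $N(X_L)$ — that is, $F_j$ is nuclear for every nucleus $j$ — and showing that $\Phi$ and $\Psi$ are mutually inverse. For nuclearity I would translate the meet-preservation and idempotency of $j$ through $\eta$ to identify $X_L\setminus{\downarrow}(F_j\cap U)$ with $\eta$ of an explicit element built from $j$, reducing the general clopen $U$ to the basic clopens $\eta(a)\setminus\eta(b)$; this is where the Esakia structure of $X_L$ (downsets of clopens are clopen) and the fact that $X_L$ is extremally order-disconnected (closures of open upsets are clopen upsets) keep the relevant sets clopen. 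For mutual inverseness, $\Psi\Phi=\mathrm{id}$ and $\Phi\Psi=\mathrm{id}$ reduce to Priestley-separation arguments: a clopen upset is $j$-fixed precisely when it is recovered from its trace on $F_j$, and $F_{j_F}=F$. Once $\Phi$ is seen to be an order-reversing bijection between $N(L)$ and $N(X_L)$, it is automatically a dual isomorphism of the underlying complete lattices, which is the assertion of the theorem.
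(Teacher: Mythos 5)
You should note at the outset that the paper contains no proof of this statement: it is imported verbatim from \cite[Thm.~30]{BG07}, so there is no internal argument to compare against, and your proposal must be measured against that cited source. Measured that way, your construction is essentially the one used there: $F_j$ is the set of $j$-saturated prime filters (equivalently the image of $X_{L_j}$ under $P\mapsto j^{-1}(P)$), and $j_F(V)=X_L\setminus\down(F\setminus V)$ on clopen upsets, with the nuclear condition being exactly what makes this land in clopen upsets. Everything you assert along the way checks out: $F_j$ is patch-closed by your intersection formula, both maps are order-reversing, your formula for the right adjoint $r^*$ is correct, and the nucleus axioms for $j_F=r^*r$ follow as you say (idempotency even holds without surjectivity of $r$, via the triangle identity $rr^*r=r$ for the Galois connection).

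The two steps you defer do close, but one of them needs a tool your sketch does not name. For nuclearity of $F_j$ the explicit identity is
\[
\down\bigl(F_j\cap(\eta(a)\setminus\eta(b))\bigr)=X_L\setminus\eta\bigl(a\to j(b)\bigr),
\]
which extends to arbitrary clopens since they are finite unions of basic ones and $\down$ commutes with finite unions. However, the nontrivial inclusion is not obtained by ``translating meet-preservation and idempotency through $\eta$'': given $x$ with $a\to j(b)\notin x$, you must \emph{produce} a $j$-saturated prime filter above $x$ containing $a$ and missing $b$, and this requires the prime filter theorem applied in the quotient frame $L_j$ --- separate the filter generated by $\{j(c\wedge a)\mid c\in x\}$ from $j(b)$, using $c\wedge a\le j(b)\Leftrightarrow j(c\wedge a)\le j(b)$, then pull back along $j$. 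The payoff is that the same identity with $a=1$ gives $j_{F_j}(\eta(a))=\eta(j(a))$, i.e., $\Psi\Phi=\mathrm{id}$, so it does double duty. For $F_{j_F}=F$, your appeal to Priestley separation is the right move and works as follows: $F\subseteq F_{j_F}$ is immediate from the definitions, and for $x\notin F$ one has $x\notin\up(\up x\cap F)$ (else $x\in F$), so there is a clopen upset $V$ containing the closed upset $\up(\up x\cap F)$ and missing $x$, whence $x\in j_F(V)\setminus V$ and $x\notin F_{j_F}$. With these details supplied, your outline becomes a complete and correct proof, agreeing in substance with the proof in the cited reference.
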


 We denote the restrictions of $\tau_L$ and $\pi_L$ to $Y_L$ by $\tau$ and $\pi$, respectively. Let $\Op_\tau(Y_L)$ be the frame of opens of
 $(Y_L,\tau)$ and $\Op_\pi(Y_L)$ the frame of opens of $(Y_L,\pi)$.

\begin{theorem}\cite[Thm.~5.9]{ABMZ18a} \label{thm: spatial}
For a frame $L$, the following are equivalent.
\begin{enumerate}
\item The frame $N(L)$ is spatial.
\item If $N \in N(X_L)$ is nonempty, then $N \cap Y_L \ne \varnothing$.
\item $N(L)$ is isomorphic to $\Op_\pi(Y_L)$.
\end{enumerate}
\end{theorem}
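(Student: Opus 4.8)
The plan is to establish the cycle of implications $(1)\Rightarrow(2)\Rightarrow(3)\Rightarrow(1)$. Throughout I would work on the dual side: by \cite[Thm.~30]{BG07} we have $N(L)\cong N(X_L)^{\mathrm{op}}$, so every question about $N(L)$ can be phrased in the concrete lattice $N(X_L)$ of nuclear subsets of $X_L$ ordered by inclusion (with top $X_L$ and bottom $\varnothing$, the duality sending $\top_{N(L)}$ to $\varnothing$). I would use the standard fact that a frame is spatial iff it has enough points, i.e.\ iff for all $u\not\le v$ there is a point $p$ (a completely prime filter) with $u\in p$ and $v\notin p$. The technical pillar on which everything rests is a precise description of the points of $N(L)$: I would show that the completely join-prime elements of $N(X_L)$ are exactly the singletons $\{y\}$ with $y\in Y_L$, and that the point of $N(L)$ determined by $y\in Y_L$ is the frame homomorphism $h_y\colon N(X_L)^{\mathrm{op}}\to\mathbf 2$ with $h_y(N)=1$ iff $y\notin N$. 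Preservation of finite meets by $h_y$ is precisely complete join-primeness of $\{y\}$, preservation of arbitrary joins uses that $\{y\}$ is nuclear, and nuclearity of $\{y\}$ is by definition the statement that $y$ is a nuclear point; the reverse direction, that every point of $N(L)$ is some $h_y$, is where the special structure of the assembly enters.

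Granting this correspondence, two of the three implications are short. For $(3)\Rightarrow(1)$, the frame of opens of any topological space is spatial, so $\Op_\pi(Y_L)$ is spatial and hence so is $N(L)$. For $(1)\Rightarrow(2)$, suppose $N(L)$ is spatial and let $N\in N(X_L)$ be nonempty; under the duality the corresponding element $u_N\in N(L)$ satisfies $u_N\ne\top$, so ``enough points'' (applied to $\top\not\le u_N$) yields a point $p$ with $u_N\notin p$. By the correspondence $p=h_y$ for some nuclear point $y$, and $u_N\notin p$ unwinds to $y\in N$. Hence $N\cap Y_L\ne\varnothing$.

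The substance of the theorem is $(2)\Rightarrow(3)$. Here I would construct the isomorphism explicitly through the restriction map $\rho\colon N(X_L)\to\{\pi\text{-closed subsets of }Y_L\}$, $\rho(N)=N\cap Y_L$; since each nuclear set is $\pi_L$-closed, $\rho$ is well defined and inclusion-preserving, and composing with the duality $N(L)\cong N(X_L)^{\mathrm{op}}$ and with complementation $\{\pi\text{-closed}\}^{\mathrm{op}}\cong\Op_\pi(Y_L)$ it suffices to prove that $\rho$ is an order-isomorphism onto the $\pi$-closed subsets of $Y_L$. The candidate inverse is the nuclear-hull map $C\mapsto\bigvee\{\{y\}\mid y\in C\}$, and the whole point reduces to the single identity
\[
N=\bigvee\{\{y\}\mid y\in N\cap Y_L\}\qquad\text{for every }N\in N(X_L),
\]
i.e.\ that every nuclear set is the nuclear hull of the nuclear points it contains; injectivity, surjectivity, and preservation of the lattice operations all follow from this together with complete join-primeness of the singletons.

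I expect this last identity to be the main obstacle, and I do not expect it to follow from lattice theory alone. Indeed, writing $M=\bigvee\{\{y\}\mid y\in N\cap Y_L\}\subseteq N$, the natural attempt is to assume $M\subsetneq N$, form the co-Heyting difference $D=N\smallsetminus M$ (a nonempty nuclear set, since $N(X_L)\cong N(L)^{\mathrm{op}}$ is co-Heyting and $M\subsetneq N$), so that by $(2)$ it contains a nuclear point $y\in D\cap Y_L\subseteq M$, and then seek a contradiction; but $y$ may lie in both $M$ and $D$, and a short chain example shows that complete join-primeness of $\{y\}$ does not by itself exclude this. The resolution should come from the concrete geometry of $X_L$ as an extremally order-disconnected Esakia space: I would use the explicit description of the join in $N(X_L)$ and of the nuclear hull in terms of closures and downsets in the patch topology, together with the fact that the closure of an open upset is clopen, to show that under $(2)$ the hull $M$ acquires no nuclear point outside $N\cap Y_L$ and recaptures all of $N$. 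This topological recovery of a nuclear set from its nuclear points is the crux of the argument.
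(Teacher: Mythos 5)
This theorem is not proved in the paper at all: it is imported from \cite[Thm.~5.9]{ABMZ18a}, and the strategy there is essentially the one you outline, namely identifying the space of points of $N(L)$ with $(Y_L,\pi)$. Your skeleton is therefore sound, but as written the proposal has genuine gaps at exactly the two load-bearing steps, both of which you flag and neither of which you close. First, the claim that every point of $N(L)$ is of the form $h_y$ with $y\in Y_L$ is already needed for your $(1)\Rightarrow(2)$, and you only say that this ``is where the special structure of the assembly enters.'' It does, but via a short concrete argument you should supply: for a point $h\colon N(X_L)^{\mathrm{op}}\to\mathbf 2$, the set $h^{-1}(0)$ is closed under arbitrary meets of $N(X_L)$, hence has a least element $N_0\ne\varnothing$ which is prime for finite unions; nuclear sets are stable under intersection with $\pi_L$-clopen sets (if $N$ is nuclear and $V$ clopen, then $\down(N\cap V\cap U)$ is clopen for clopen $U$), so if $N_0$ contained two distinct points one could split $N_0=(N_0\cap V)\cup(N_0\setminus V)$ by a clopen $V$ separating them, contradicting primeness; hence $N_0=\{y\}$ with $y\in Y_L$ and $h=h_y$.

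Second, and more seriously, your reduction of $(2)\Rightarrow(3)$ misplaces the difficulty. The identity $N=\bigvee\{\{y\}\mid y\in N\cap Y_L\}$ does not require the heavy ``topological recovery'' you anticipate; it follows in two lines from injectivity of the trace map $\rho(N)=N\cap Y_L$ under hypothesis $(2)$, which in turn is the same clopen-separation trick as above: if $A\not\subseteq B$ are nuclear, pick $x\in A\setminus B$ and a clopen $V$ with $x\in V$ and $V\cap B=\varnothing$; then $A\cap V$ is a nonempty nuclear set, so by $(2)$ it contains some $y\in Y_L$ with $y\in A\setminus B$. Given injectivity, your hull $M=\bigvee\{\{y\}\mid y\in N\cap Y_L\}$ satisfies $M\subseteq N$ and $N\cap Y_L\subseteq M\cap Y_L\subseteq N\cap Y_L$, so $\rho(M)=\rho(N)$ and hence $M=N$; your worry about $y\in M\cap D$ in the co-Heyting attempt simply evaporates on this route. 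Conversely, surjectivity of $\rho$ onto the $\pi$-closed subsets of $Y_L$ does \emph{not} ``follow from this together with complete join-primeness of the singletons,'' as you assert: it needs two separate observations, namely that complements of the basic $\pi$-opens $\eta(a)\setminus\eta(b)$ are clopen and hence nuclear, and that traces of nuclear sets are closed under arbitrary intersections, because $y\in\bigcap_i N_i$ with $\{y\}$ nuclear forces $\{y\}\le\bigwedge_i N_i$ in $N(X_L)$ (so every $\pi$-closed subset of $Y_L$, being an intersection of basic $\pi$-closed traces, is itself a trace). With these three repairs --- the characterization of points, the separation argument for injectivity, and the intersection argument for surjectivity --- your outline becomes a correct proof along the same lines as the one cited from \cite{ABMZ18a}; without them it is a plan whose crux is still open.
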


For $F$ a closed subset of $X_L$, let $\max F$ be the set of maximal points and $\min F$ the set of minimal points of $F$. It is well known that
for each $x\in F$ there are $m\in\min F$ and $M\in\max F$ with $m\le_L x\le_L M$. Therefore, if $F\ne\varnothing$, then $\max F,\min F\ne\varnothing$.
The following is a useful corollary of Theorem~\ref{thm: spatial}.

\begin{corollary} \label{cor: spatial}
Let $L$ be a frame. Then $N(L)$ is spatial iff $\max U \cap Y_L\ne \varnothing$ for each nonempty clopen downset $U$ of $X_L$.
\end{corollary}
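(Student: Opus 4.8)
The plan is to prove both implications through the equivalence (1)$\Leftrightarrow$(2) of Theorem~\ref{thm: spatial}, so that ``$N(L)$ is spatial'' becomes the concrete statement that every nonempty nuclear subset of $X_L$ meets $Y_L$. Throughout I will use two facts recorded above: that $X_L$ is an Esakia space in which the downset of a clopen set is clopen, and that every nonempty closed set $F$ has $\max F\neq\varnothing$ with each point of $F$ lying below some point of $\max F$. I also note the elementary observation that a singleton $\{x\}$ is nuclear exactly when $\downarrow x$ is clopen, so that clopen downsets (indeed all clopen sets) are automatically nuclear.

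For the direction ($\Leftarrow$), assume $\max U\cap Y_L\neq\varnothing$ for every nonempty clopen downset $U$, and let $N$ be a nonempty nuclear subset of $X_L$. Applying the nuclearity condition with the clopen set $V=X_L$ shows that $\downarrow N$ is a clopen downset, and it is nonempty since $N\neq\varnothing$. Hence $\max(\downarrow N)\cap Y_L\neq\varnothing$ by hypothesis. Finally $\max(\downarrow N)\subseteq N$: if $z\in\max(\downarrow N)$ then $z\le w$ for some $w\in N\subseteq\downarrow N$, and maximality forces $z=w\in N$. Therefore $N\cap Y_L\neq\varnothing$, which is condition (2) of Theorem~\ref{thm: spatial}, so $N(L)$ is spatial.

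For the direction ($\Rightarrow$) I argue by contraposition: suppose there is a nonempty clopen downset $U$ with $\max U\cap Y_L=\varnothing$, and I will produce a nonempty nuclear set disjoint from $Y_L$, contradicting condition (2). The candidate is
\[
N \;=\; U\setminus{\downarrow}(U\cap Y_L),
\]
the result of deleting from $U$ the down-closure of its nuclear points. Two of the three required properties are immediate. First, $N\cap Y_L=\varnothing$, since any $y\in U\cap Y_L$ lies in $\downarrow y\subseteq{\downarrow}(U\cap Y_L)$ and is thus removed. Second, $\max U\subseteq N$, so $N\neq\varnothing$: if a maximal point $M$ of $U$ lay in ${\downarrow}(U\cap Y_L)$ then $M\le y$ for some $y\in U\cap Y_L$, and maximality would give $M=y\in Y_L$, contradicting $\max U\cap Y_L=\varnothing$.

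The remaining and genuinely hard step is to verify that $N$ is nuclear. Since ${\downarrow}(U\cap Y_L)=\bigcup\{\downarrow y : y\in U\cap Y_L\}$ is a union of clopen downsets it is open, so $N$ is closed; the content is to show that $\downarrow(N\cap V)$ is clopen for every clopen $V$. Writing $C=U\cap V$, this amounts to proving that $\downarrow\bigl(C\setminus{\downarrow}(U\cap Y_L)\bigr)$ is clopen, and here one must exploit the special order-topological structure of $X_L$ beyond mere Esakia duality, namely its extremal order-disconnectedness (the closure of every open upset is a clopen upset), which encodes the completeness of $L$. I expect to reduce to the case where $V$ is a clopen upset, to analyze the upset of points whose entire part of $C$ lying above them is trapped inside ${\downarrow}(U\cap Y_L)$, and to show that this set is closed by combining the existence of maximal points above every point with the disconnectedness property. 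This verification is the main obstacle; the two clean observations above ($N\cap Y_L=\varnothing$ and $\max U\subseteq N$) carry the conceptual load, while establishing nuclearity of $N$ is the technical heart of the argument.
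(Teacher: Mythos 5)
Your ($\Leftarrow$) direction is correct and complete, and it is essentially the paper's own converse argument made self-contained: the paper likewise passes from a nonempty nuclear $N$ to the clopen downset $\down N$ and uses that its maximal points land back in $N$; your direct verification that $\max(\down N)\subseteq N$ by maximality is clean and avoids any external citation.

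The ($\Rightarrow$) direction, however, has a genuine gap, which you yourself flag: you never prove that $N=U\setminus{\down}(U\cap Y_L)$ is nuclear, and your closing paragraph is a plan (``I expect to reduce\dots'') rather than an argument. This is not a routine verification to be deferred: nuclearity demands that $\down(N\cap V)$ be clopen for \emph{every} clopen $V$, and nothing in your construction controls the points of $N$ strictly below $\max U$, so there is no evidence your candidate is nuclear at all. The missing idea is that no such candidate needs to be built. Since $U$ is clopen, $U$ itself is nuclear (for clopen $V$, $U\cap V$ is clopen, and the Esakia property makes $\down(U\cap V)$ clopen), and the paper then invokes \cite[Cor.~4.5]{ABMZ18a} --- if $F$ is nuclear, then so is $\max F$ --- to conclude that $\max U$ is a nonempty nuclear set; in the contrapositive setting it is disjoint from $Y_L$ by hypothesis, contradicting Theorem~\ref{thm: spatial}(2) immediately. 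Note moreover that $\max U\subseteq N$ and in fact $\max N=\max U$ (every point of the closed set $N\subseteq U$ lies below a point of $\max U$, which is contained in $N$), so the extra points you retain in $N$ buy nothing while forfeiting access to the one result that makes the direction work. Without the fact that the maximal points of a nuclear set form a nuclear set, or an equivalent substitute proved from scratch, your forward direction does not go through.
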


\begin{proof}
First suppose that $N(L)$ is spatial. Let $U$ be a nonempty clopen downset of $X_L$. Then $U\in N(X_L)$, so $\max U\in N(X_L)$ by
\cite[Cor.~4.5]{ABMZ18a}. Since $U\ne\varnothing$, we have $\max U\ne\varnothing$. Thus, $\max U \cap Y_L \ne \varnothing$ by
Theorem~\ref{thm: spatial}.

Conversely, suppose that $\max U \cap Y_L\ne \varnothing$ for each clopen downset $U$ of $X_L$. Let $N \in N(L)$ be nonempty. Then $\max N$
is nonempty and $\max N \in N(L)$. If $U = \down N$, then $U$ is a clopen downset and $\max U = \max N$. Therefore, $\max U \in N(L)$ is
nonempty. By our assumption, $\max U \cap Y_L \ne \varnothing$. This implies that $\max N \cap Y_L \ne \varnothing$. Thus, $N(L)$ is spatial
by Theorem~\ref{thm: spatial}.
\end{proof}

 Let $S$ be a topological space and $T$ a subspace of $S$. We recall that $x\in T$ is an \emph{isolated} point of $T$ if
$\{x\}=U\cap T$ for some open subset $U$ of $S$, and that $x$ is a \emph{weakly isolated} point of $T$ if
$x\in U\cap T\subseteq\overline{\{x\}}$ for some open subset $U$ of $S$. Then $X$ is \emph{scattered} if each nonempty closed subspace
of $X$ has an isolated point, and $X$ is \emph{weakly scattered} if each nonempty closed subspace of $X$ has a weakly isolated point.
For a spatial frame $L$, to the conditions of Theorem~\ref{thm: spatial} and Corollary~\ref{cor: spatial}, we could add that $(Y_L,\tau)$ is weakly scattered.

\begin{theorem}\cite[Thm.~7.3]{ABMZ18a} \label{thm: weakly scattered}
Let $L$ be a spatial frame. Then $N(L)$ is spatial iff $(Y_L,\tau)$ is weakly scattered.
\end{theorem}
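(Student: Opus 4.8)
The plan is to route everything through Corollary~\ref{cor: spatial}, which reduces spatiality of $N(L)$ to the order-theoretic condition that $\max U\cap Y_L\ne\varnothing$ for every nonempty clopen downset $U$ of $X_L$; the task is then to match this condition with weak scatteredness of $(Y_L,\tau)$. First I would set up the dictionary between the two pictures. Since $L$ is a frame, Priestley duality identifies the clopen upsets of $X_L$ with the elements of $L$ via $\eta$, so every clopen downset has the form $U=X_L\setminus\eta(a)$; as $\eta(a)$ is $\tau_L$-open, each clopen downset is in fact $\tau_L$-closed, and hence $U\cap Y_L$ is a $\tau$-closed subspace of $Y_L$. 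On the pointwise side, because the $\tau_L$-opens are upsets we have $\overline{\{x\}}=\down x$, so a point $x$ of a $\tau$-closed $T\subseteq Y_L$ is \emph{weakly isolated} precisely when there is a $\tau_L$-open upset $W$ with $x\in W$ and $W\cap T\subseteq\down x$; that is, $x$ dominates every nuclear point of $T$ lying in some open neighbourhood of $x$.

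For the implication that weak scatteredness of $(Y_L,\tau)$ gives spatiality of $N(L)$, I would fix a nonempty clopen downset $U=X_L\setminus\eta(a)$ and consider the $\tau$-closed set $T=U\cap Y_L$. Here spatiality of $L$ enters to guarantee $T\ne\varnothing$: since $U\ne\varnothing$ we have $a\ne 1$, so spatiality yields a point (completely prime filter) of $L$ omitting $a$, and such points, being nuclear, give an element of $T$. Weak scatteredness then produces a weakly isolated $x\in T$, and the remaining, decisive step is to promote $x$ to an element of $\max U$: one must rule out the existence of a point $z\in U$ with $z>_L x$, and since such a $z$ need not be nuclear this cannot be read off from weak scatteredness alone.

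For the converse I would argue contrapositively against Theorem~\ref{thm: spatial}(2). Starting from a nonempty $\tau$-closed $T\subseteq Y_L$ with no weakly isolated point, I would form its patch-closure in $X_L$, pass to an associated clopen downset, and extract its maximal points, obtaining a nonempty nuclear set $N\in N(X_L)$; the hypothesis that $T$ has no weakly isolated point would then be used to force $N\cap Y_L=\varnothing$, contradicting spatiality. Verifying that the set so constructed is genuinely nuclear (using that $X_L$ is an extremally order-disconnected Esakia space, so that the downset of a clopen is clopen and the closure of an open upset is a clopen upset) and that it avoids $Y_L$ constitutes the bulk of this direction.

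The main obstacle, common to both directions, is exactly the promotion step and its dual: reconciling ``weakly isolated, i.e.\ maximal among nuclear points'' in the coarse spectral topology $\tau$ with ``maximal in the clopen downset $U$'' in the patch topology $\pi$. A weakly isolated point only controls the nearby nuclear points of $T$, whereas $\max U$ is computed against all of $U$, including possibly non-nuclear points lying above it; closing this gap is where spatiality of $L$ and the extremal order-disconnectedness of $X_L$ must be used in earnest, and where one expects to need that $(Y_L,\tau)$ is sober, so that its closed subspaces and their weakly isolated points correspond cleanly to the clopen downsets of $X_L$ and their maximal nuclear points in Corollary~\ref{cor: spatial}.
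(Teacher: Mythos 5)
Your proposal is a roadmap rather than a proof: in both directions you explicitly flag the decisive step as open (``promote $x$ to an element of $\max U$,'' ``verifying that the set so constructed is genuinely nuclear \ldots constitutes the bulk of this direction''), and neither step is carried out. Note also that the paper itself does not prove this statement --- it imports it from \cite[Thm.~7.3]{ABMZ18a} --- so the real comparison is with what a complete argument needs. The missing idea in your forward direction is that spatiality of $L$ buys much more than nonemptiness of $T$: it gives that $\varepsilon[S]$, hence $Y_L$, is dense in $(X_L,\pi_L)$ (Theorem~\ref{thm: compactification}, together with $\varepsilon[S]\subseteq Y_L$ from Remark~\ref{rem: S in Y}), and this density is exactly what closes the promotion gap. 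Concretely: if $x\in T=U\cap Y_L$ is weakly isolated, pick $b\in L$ with $x\in \eta(b)\cap T\subseteq \down x$, and set $V=U\cap\eta(b)$, a nonempty clopen set. Then $V\cap \varepsilon[S]\subseteq V\cap Y_L\subseteq \down x$, and since $V$ is the $\pi_L$-closure of $V\cap\varepsilon[S]$ by density, while $\down x$ is clopen because $x$ is nuclear, we get $V\subseteq \down x$. Now take $z\in\max U$ with $x\le z$ (which exists since $U$ is closed); as $\eta(b)$ is an upset, $z\in V\subseteq\down x$, so $z=x$, and $x\in\max U\cap Y_L$, as Corollary~\ref{cor: spatial} requires. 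This disposes precisely of the non-nuclear points $z>x$ that you correctly identified as the obstruction but could not control.

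The converse direction of your sketch is not merely incomplete but unworkable as stated. From a $\tau$-closed $T\subseteq Y_L$ you propose to ``form its patch-closure, pass to an associated clopen downset, and extract its maximal points, obtaining a nonempty nuclear set.'' But $\cl(T)$ need not be a downset; $\down\cl(T)$ is a closed downset yet in general not clopen (closed downsets are only \emph{intersections} of clopen downsets in a Priestley space, and there is no canonical clopen downset attached to $T$); and $\max F$ of a merely closed $F$ need not be nuclear --- the result you would want, \cite[Cor.~4.5]{ABMZ18a}, applies only when $F$ is already nuclear. So the nuclear set $N$ with $N\cap Y_L=\varnothing$ that would contradict Theorem~\ref{thm: spatial}(2) is never actually produced. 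A correct converse again leans on the $\pi_L$-density of $Y_L$ supplied by spatiality of $L$ (this is how \cite[Thm.~5.5]{ABMZ18a} reduces to the weakly scattered condition for spatial $L$, as the Remark following the theorem indicates), rather than on sobriety of $(Y_L,\tau)$, which is where your sketch expects the work to happen.
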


\begin{remark}
If in Theorem~\ref{thm: weakly scattered} we do not assume that $L$ is spatial, then to $(Y_L, \tau)$ being weakly scattered we need to add the condition that $Y_L$ is dense in $(X_L, \pi_L)$ \cite[Thm.~5.5]{ABMZ18a}.
\end{remark}

Since $N(L)$ spatial implies that $L$ is spatial, from now on we will assume that $L$ is a spatial frame, so $L=\Op S$ for some
topological space $S$. There is a natural map $\varepsilon:S\to X_L$ given by
\[
\varepsilon(s)=\{U\in\Op S\mid s\in U\}.
\]
For $U\in\Op S$ we have that $\varepsilon^{-1}\eta(U)=U$. Therefore, $\varepsilon$ is a continuous map from $S$ to $(X_L,\tau_L)$,
and it is an embedding iff $S$ is a $T_0$-space.

\begin{theorem}\label{thm: Y}
\cite[Prop.~7.1]{ABMZ18a} $(Y_L,\tau)$ is homeomorphic to the soberification of $S$.
\end{theorem}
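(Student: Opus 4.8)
The plan is to describe the nuclear points of $X_L$ explicitly and then to recognize $(Y_L,\tau)$ as the soberification of $S$. Concretely, I will show that a prime filter $x\in X_L$ is a nuclear point iff $x$ is a \emph{completely prime} filter of $L$ (that is, $\bigvee_i a_i\in x$ forces $a_i\in x$ for some $i$), and that under this identification the basic $\tau$-opens are exactly the opens of the soberification. Since the completely prime filters of $\Op S=L$ are by definition the points of the soberification, this yields the result.

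First I would reduce the nuclear-point condition. As $\{x\}$ is $\pi_L$-closed, for each clopen $U$ the set $\downarrow(\{x\}\cap U)$ equals $\downarrow x$ when $x\in U$ and is empty otherwise; hence $\{x\}$ is nuclear iff $\downarrow x=\{y : y\subseteq x\}$ is clopen (recall $\le_L$ is inclusion). By Priestley duality the clopen upsets of $X_L$ are exactly the sets $\eta(a)$, so the clopen downsets are exactly the complements $X_L\setminus\eta(a)$. Therefore $\downarrow x$ is clopen iff $\{y : y\not\subseteq x\}=\eta(a)$ for some $a\in L$, i.e.\ iff there is $a\in L$ with, for every prime filter $y$, $a\in y\iff y\not\subseteq x$.

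Next I would identify this condition with complete primeness, using that $x$ is completely prime iff the ideal $L\setminus x$ has a greatest element $a_x=\bigvee(L\setminus x)$ (here $L$ is complete, being a frame). For the forward direction one checks $\eta(a_x)=\{y : y\not\subseteq x\}$: if $a_x\in y$ then $a_x\in y\setminus x$; conversely any $b\in y\setminus x$ satisfies $b\le a_x$, so $a_x\in y$ since $y$ is an upset. For the converse, from $\eta(a)=\{y : y\not\subseteq x\}$ and prime-filter separation in the distributive lattice $L$, any $b\notin x$ must satisfy $b\le a$ (otherwise some prime filter contains $b$ but not $a$, forcing $b\in x$); hence $a=\max(L\setminus x)$ and $x$ is completely prime. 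This gives $Y_L=\{\text{completely prime filters of }L\}$, and the basic $\tau$-opens $\eta(a)\cap Y_L=\{x\in Y_L : a\in x\}$ are precisely the opens of the soberification, so the identity map is the desired homeomorphism.

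I expect the main obstacle to be the backward direction of the nuclear-point characterization: deducing from clopenness of $\downarrow x$ that $L\setminus x$ genuinely has a greatest element. This is where Priestley duality (clopen upsets $=\eta(a)$) and prime-filter separation do the real work. The remaining steps—the harmless reduction to ``$\downarrow x$ clopen'' and the comparison of the two topologies—are routine once the point set $Y_L$ has been pinned down as the set of completely prime filters.
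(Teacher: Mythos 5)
Your proposal is correct, and it is worth noting that the paper itself offers no proof of this statement: it is imported by citation from \cite[Prop.~7.1]{ABMZ18a}, so you have in effect supplied a self-contained argument where the paper has only a pointer. Your route is the natural one, and each step checks out: taking $U = X_L$ in the nuclear condition shows that $\{x\}$ nuclear reduces to $\down x$ clopen (the other direction being trivial since $\down(\{x\} \cap U)$ is $\down x$ or $\varnothing$); Priestley duality identifies clopen upsets with the $\eta(a)$; and your two-way argument correctly pins down clopenness of $\down x$ as the existence of $a = \max(L \setminus x)$, i.e., complete primeness of $x$ --- the backward direction via prime-filter separation ($b \not\le a$ yields a prime $y$ with $b \in y$, $a \notin y$, hence $y \subseteq x$ and $b \in x$, a contradiction) is exactly right. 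Notably, your identification generalizes precisely the computation the paper does carry out in the remark following the theorem, where it is shown that $\down \varepsilon(s) = X_L \setminus \eta\left(S \setminus \overline{\{s\}}\right)$; in your notation this says $a_{\varepsilon(s)} = S \setminus \overline{\{s\}}$, the special case of your characterization at points of $S$. The one step you leave implicit, but correctly flag as routine, is that the family $\{\eta(a) \cap Y_L \mid a \in L\}$ is already closed under arbitrary unions --- since every point of $Y_L$ is completely prime, $\bigcup_i \left(\eta(a_i) \cap Y_L\right) = \eta\left(\bigvee_i a_i\right) \cap Y_L$ --- so these basic sets constitute the entire topology $\tau$ on $Y_L$ and match the opens of $\func{pt}(\Op S)$ exactly, making the identity on points a homeomorphism; since the theorem asserts only a homeomorphism of spaces (not compatibility with the unit map $\varepsilon$, though that too is immediate from your description), this completes the proof.
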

We can view $\varepsilon$ as the soberification map from $S$ to $Y_L$.

\begin{remark} \label{rem: S in Y}
For the reader's convenience, we give an elementary argument for why $\varepsilon[S] \subseteq Y_L$. To see this we must show that
$\down \varepsilon(s)$ is clopen in $(X_L,\pi_L)$. It is sufficient to observe that
$\down \varepsilon(s) = X_L\setminus\eta\left(S \setminus \overline{\{s\}}\right)$ for each $s \in S$. We have
\begin{eqnarray*}
x \in \down \varepsilon(s) & \Longleftrightarrow & x \le \varepsilon(s) \\
& \Longleftrightarrow & (\forall U\in\Op S)(U\in x \Rightarrow s\in U) \\
& \Longleftrightarrow & (\forall U\in\Op S)(s\notin U \Rightarrow U\notin x).
\end{eqnarray*}
On the other hand,
\[
x \in X_L\setminus\eta\left(S \setminus \overline{\{s\}}\right) \Longleftrightarrow S \setminus \overline{\{s\}}\notin x.
\]
Since $S \setminus \overline{\{s\}}$ is the largest open set missing $s$, we conclude that
\[
x \in \down \varepsilon(s) \mbox{ iff } x\in X_L\setminus\eta\left(S \setminus \overline{\{s\}}\right),
\]
yielding the desired equality.
\end{remark}

One of the key techniques of Simmons in the study of $N(\Op S)$ is the notion of the front topology on $S$. We recall that the \emph{front
topology} on $S$ is the topology $\tau_F$ generated by $\{U\setminus V\mid U,V\in\Op S\}$.

\begin{theorem}\cite[Lem.~7.9]{ABMZ18a} \label{thm: compactification}
The map $\varepsilon:(S,\tau_F)\to(X_L,\pi_L)$ is a compactification of $(S,\tau_F)$.
\end{theorem}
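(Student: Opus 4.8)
The plan is to establish the three defining features of a compactification: that the target $(X_L,\pi_L)$ is compact, that the image $\varepsilon[S]$ is dense, and that $\varepsilon$ is an embedding. Compactness requires no work, since $\pi_L$ is the patch topology of $\tau_L$ and is therefore a Stone topology, in particular compact, as recorded in the preliminaries. The engine for the remaining two points is the single identity $\varepsilon^{-1}\eta(U)=U$ (noted in the excerpt) together with the fact that $\{\eta(U)\setminus\eta(V)\mid U,V\in\Op S\}$ and $\{U\setminus V\mid U,V\in\Op S\}$ are bases for $\pi_L$ and $\tau_F$, respectively. So everything reduces to tracking these basic opens across $\varepsilon$.

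For continuity and the embedding property, I would first compute preimages of basic patch-opens. Since $\varepsilon(s)\in\eta(U)\setminus\eta(V)$ says exactly that $s\in U$ and $s\notin V$, we get $\varepsilon^{-1}(\eta(U)\setminus\eta(V))=U\setminus V$, a basic front-open; as these generate $\pi_L$, the map $\varepsilon$ is continuous from $(S,\tau_F)$ to $(X_L,\pi_L)$. For the reverse direction I would verify the companion identity
\[
\varepsilon[U\setminus V]=\varepsilon[S]\cap(\eta(U)\setminus\eta(V)),
\]
whose two inclusions are again immediate from the membership description of $\eta$. This shows that $\varepsilon$ carries each basic front-open to a relatively open subset of $\varepsilon[S]$, hence, taking unions, that $\varepsilon$ is open onto its image. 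Together with injectivity of $\varepsilon$ --- which holds precisely when $S$ is $T_0$, and otherwise is arranged by passing to the $T_0$-reflection --- this makes $\varepsilon$ a homeomorphism onto $\varepsilon[S]$, i.e.\ an embedding. Hausdorffness of $(S,\tau_F)$ comes for free, as a subspace of the Stone space $(X_L,\pi_L)$.

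For density I would show that every nonempty basic patch-open meets $\varepsilon[S]$. If $\eta(U)\setminus\eta(V)\ne\varnothing$, pick a prime filter $x$ with $U\in x$ and $V\notin x$; then $U\not\subseteq V$, for otherwise $U\le V$ in $\Op S$ would force $V\in x$. Hence $U\setminus V\ne\varnothing$ in $S$, and any $s\in U\setminus V$ satisfies $\varepsilon(s)\in\eta(U)\setminus\eta(V)$ by the computation above. Thus $\varepsilon[S]$ meets every nonempty basic open of $\pi_L$, so it is dense.

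I expect the only real subtlety to be bookkeeping rather than depth: the two basic-set identities do all the work, and the one genuinely delicate point is the injectivity/$T_0$ issue, since the word ``compactification'' presupposes an embedding. Everything else --- continuity, openness onto the image, density, and compactness of the target --- follows mechanically from $\varepsilon^{-1}\eta(U)=U$ and the descriptions of the two bases.
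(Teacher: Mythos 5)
Your proof is correct, but there is nothing in the paper to compare it against: Theorem~\ref{thm: compactification} is imported verbatim from \cite[Lem.~7.9]{ABMZ18a} with no in-text proof, so your write-up in effect supplies the missing argument rather than taking a different route. The two identities you isolate, $\varepsilon^{-1}\bigl(\eta(U)\setminus\eta(V)\bigr)=U\setminus V$ and $\varepsilon[U\setminus V]=\varepsilon[S]\cap\bigl(\eta(U)\setminus\eta(V)\bigr)$, do exactly what you claim: the first gives continuity, the second (with images commuting with unions) gives openness onto the image, and your density step is sound --- if $\eta(U)\setminus\eta(V)\ne\varnothing$, some prime filter contains $U$ but not $V$, and since filters are upward closed this rules out $U\subseteq V$, so a point $s\in U\setminus V$ exists and $\varepsilon(s)$ lies in the given basic open. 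Compactness of the target is indeed free, as $\pi_L$ is a Stone topology (note that primeness of the filters is what makes $\{\eta(U)\setminus\eta(V)\}$ a basis, via $\eta(V_1)\cup\eta(V_2)=\eta(V_1\cup V_2)$, but the paper grants this). The one point deserving sharper treatment is the $T_0$ caveat you flag: if $S$ is not $T_0$, then not only does injectivity fail, but $(S,\tau_F)$ itself fails to be $T_0$ (front-basic opens cannot separate points with identical open neighborhoods), so no embedding into the Hausdorff space $(X_L,\pi_L)$ can exist and the statement only makes literal sense under a tacit $T_0$ hypothesis --- or, as you arrange, after passing to the $T_0$-reflection, which is harmless here since $\Op S\cong\Op S_0$ and from Section~3 onward the paper works with the skeleton anyway. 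With that caveat recorded, your argument is a complete and appropriately elementary proof of the cited result.
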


Since $(X_L,\pi_L)$ is a Stone space, it follows that $(Y_L, \pi)$ is a zero-dimensional Hausdorff space. But in general,
$(Y_L, \pi)$ is not compact.

\section{Main Theorem}

We recall that $S$ is an \emph{Alexandroff space} if the intersection of an arbitrary family of open sets is open. Equivalently $S$ is
Alexandroff iff each point of $S$ has a least open neighborhood. It is well known that Alexandroff spaces are in 1-1 correspondence with
preordered sets. Indeed, the \emph{specialization preorder} on $S$, defined by $s\le t$ iff $s\in\overline{\{t\}}$, is reflexive
and transitive, and $U$ is open in $S$ iff $U$ is an upset (that is, $s\in U$ and $s\le t$ imply $t\in U$; equivalently $\up U=U$).
Moreover, $S$ is $T_0$ iff the specialization order is a partial order. From now on we will think of Alexandroff spaces as preorders
$(S,\le)$ and of the frame $\Op S$ as the frame of upsets of $(S,\le)$. Then closed sets are downsets ($s\le t$ and $t\in F$ imply $s\in F$
or equivalently $\down F=F$) and the closure of $A\subseteq S$ is $\down A$.

For a preorder $(S,\le)$ define an equivalence relation $\sim$ on $S$ by $x\sim y$ iff $x\le y$ and $y\le x$. Then $(S_0,\le_0)$ is a partial
order, known as the \emph{skeleton} of $(S,\le)$, where $S_0={S/{\sim}}$ and $[x]\le_0[y]$ iff $x\le y$. Topologically, the skeleton $S_0$ is the $T_0$-reflection
of $S$. Since $\Op S$ is isomorphic to $\Op S_0$, we may restrict our attention to posets.

Let $S$ be a poset and let $L=\Op S$. The spectrum $X_L$ of $L$ was described in \cite[Sec.~3]{BGMM06} as the Nachbin compactification of $S$.
We recall that an ordered topological space $(X,\tau,\le)$ is a \emph{Nachbin space} if $X$ is compact (Hausdorff) and $\le$ is closed in the
product topology, and that an \emph{order-compactification} of an ordered topological space $(X,\tau,\le)$ is a Nachbin space $(Y,\pi,\le)$
such that there is a topological and order embedding $e:X\to Y$ with $e[X]$ topologically dense in $Y$. A \emph{Nachbin compactification} of $(X,\tau,\le)$
is then the largest order-compactification of $(X,\tau,\le)$. It is an order-topological analogue of the Stone-\v{C}ech compactification.
In particular, every order-preserving continuous map from $X$ to a Nachbin space has a unique extension to the Nachbin compactification of $X$.
Viewing a poset $S$ as an ordered topological space with the discrete topology, we have the following:

\begin{theorem} \cite[Prop.~3.4]{BGMM06} \label{thm: Nachbin}
Let $S$ be a poset and $L=\Op S$ the frame of upsets of $S$. The Nachbin compactification of $S$ is order-homeomorphic to $(X_L,\pi_L,\le_L)$.
\end{theorem}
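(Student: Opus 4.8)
The plan is to exhibit $(X_L,\pi_L,\le_L)$ as an order-compactification of $S$ and then to verify the universal property that characterizes the Nachbin compactification among all order-compactifications; by the remark preceding the statement, an order-compactification satisfying that universal property is automatically the largest one. Most of the order-compactification data is essentially free from the preliminaries. From the discussion following the definition of the spectrum, $(X_L,\pi_L,\le_L)$ is a Priestley space, hence compact Hausdorff with $\le_L$ closed in the product topology; that is, it is a Nachbin space. Next I would observe that, since $S$ is a poset and $\Op S$ consists of its upsets, the front topology $\tau_F$ on $S$ is the discrete topology: for each $s$ both $\up s$ and $\up s\setminus\{s\}$ are upsets (antisymmetry makes the latter an upset), so $\{s\}=\up s\setminus(\up s\setminus\{s\})$ is front open. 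Consequently Theorem~\ref{thm: compactification} asserts precisely that $\varepsilon\colon S\to(X_L,\pi_L)$, with $S$ carrying the discrete topology, is a topological compactification, i.e.\ a dense topological embedding. Finally, $\varepsilon$ is an order embedding: if $s\le t$ then every upset containing $s$ contains $t$, so $\varepsilon(s)\subseteq\varepsilon(t)$; conversely $\varepsilon(s)\subseteq\varepsilon(t)$ applied to the principal upset $\up s\in\varepsilon(s)$ forces $t\in\up s$, i.e.\ $s\le t$. Thus $\varepsilon$ is a dense order-topological embedding, and $(X_L,\pi_L,\le_L)$ is an order-compactification of $S$.

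The substantive step is the universal property: every order-preserving (automatically continuous) map $f\colon S\to K$ into a Nachbin space $K$ should extend uniquely to a continuous order-preserving $\bar f\colon X_L\to K$. I would first reduce to $K=[0,1]$ via Nachbin's embedding theorem, which realizes $K$ as a closed subspace of a power $[0,1]^I$ through order-preserving continuous coordinate maps $g_i$. For an order-preserving $h\colon S\to[0,1]$, the level sets $U_r=\{s\in S\mid h(s)\ge r\}$ are upsets, hence elements of $L$, decreasing in $r$, so the clopen upsets $\eta(U_r)\subseteq X_L$ are decreasing as well. I would then define $\bar h(x)=\sup\{r\mid U_r\in x\}$ (with $\sup\varnothing=0$) and check three things. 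It extends $h$, since $\bar h(\varepsilon(s))=\sup\{r\mid s\in U_r\}=\sup\{r\mid h(s)\ge r\}=h(s)$. It is order-preserving, because each $\eta(U_r)$ is an upset, so $x\le_L y$ gives $\{r\mid x\in\eta(U_r)\}\subseteq\{r\mid y\in\eta(U_r)\}$. And it is continuous, because the monotonicity of $r\mapsto\eta(U_r)$ yields $\{x\mid\bar h(x)\ge a\}=\bigcap_{r<a}\eta(U_r)$ and $\{x\mid\bar h(x)\le a\}=\bigcap_{r>a}\bigl(X_L\setminus\eta(U_r)\bigr)$, both closed, so preimages of a subbasis of $[0,1]$ are open. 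Uniqueness follows since any two such extensions agree on the dense set $\varepsilon[S]$ and land in a Hausdorff space.

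To assemble the general case I would set $h_i=g_i\circ f$, extend each to $\bar h_i\colon X_L\to[0,1]$ as above, and form $\Phi=(\bar h_i)_i\colon X_L\to[0,1]^I$, which is continuous and order-preserving. On the dense set $\varepsilon[S]$ it equals $e_K\circ f$, where $e_K\colon K\hookrightarrow[0,1]^I$ is the Nachbin embedding; hence $\Phi(X_L)=\Phi(\overline{\varepsilon[S]})\subseteq\overline{e_K[f(S)]}\subseteq e_K[K]$, using that $e_K[K]$ is closed. Composing with $e_K^{-1}$ produces the desired extension $\bar f=e_K^{-1}\circ\Phi$, and uniqueness again comes from density. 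This verifies the universal property, whence $(X_L,\pi_L,\le_L)$ is order-homeomorphic to the Nachbin compactification of $S$.

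I expect the main obstacle to be the $[0,1]$-valued extension, specifically proving continuity of $\bar h$ cleanly through the decreasing family $\eta(U_r)$ and handling the supremum definition without gaps, together with the correct invocation of Nachbin's Tychonoff/Urysohn-type embedding of a Nachbin space into a power of $[0,1]$. By contrast, the embedding and density are almost immediate once one notes that the front topology on a poset is discrete, so that Theorem~\ref{thm: compactification} applies directly; the conceptual weight of the argument therefore lies entirely in the universal property.
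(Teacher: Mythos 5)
This theorem is imported from \cite[Prop.~3.4]{BGMM06}; the paper gives no internal proof, so there is nothing to compare line by line, and your proposal amounts to a correct self-contained reconstruction of the cited result. Both pillars of your argument check out. First, your observation that the front topology on a poset is discrete is right (antisymmetry makes $\up s\setminus\{s\}$ an upset, so $\{s\}=\up s\setminus(\up s\setminus\{s\})$ is front-open), and it converts Theorem~\ref{thm: compactification} into the statement that $\varepsilon$ is a dense topological embedding of the discrete space $S$; your verification that $\varepsilon$ is also an order embedding via principal upsets is correct, and no circularity arises since Theorem~\ref{thm: compactification} is established for arbitrary spatial frames in \cite{ABMZ18a} independently of \cite{BGMM06}. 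Second, the $[0,1]$-valued extension is sound: the identities $\{x\mid\bar h(x)\ge a\}=\bigcap_{r<a}\eta(U_r)$ and $\{x\mid\bar h(x)\le a\}=\bigcap_{r>a}(X_L\setminus\eta(U_r))$ hold precisely because prime filters are upward closed and $r\mapsto U_r$ is decreasing, and the assembly through Nachbin's cube-embedding theorem, with the image argument $\Phi(X_L)\subseteq \cl(e_K[f(S)])\subseteq e_K[K]$, is the standard reduction. The one point you should make explicit rather than attribute to ``the remark preceding the statement'': the paper's remark asserts only that the largest order-compactification \emph{has} the extension property, whereas you need the converse, that an order-compactification with the extension property is, up to order-homeomorphism, the largest. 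This is routine but is a genuine step: given any order-compactification $c\colon S\to Z$, the map $c$ is order-preserving and continuous into a Nachbin space, so it extends over $X_L$ to a continuous order-preserving map whose image is compact, hence closed, and contains the dense set $c[S]$, so the extension is onto $Z$ and fixes $S$, showing $X_L$ dominates $Z$; and if two order-compactifications dominate each other, the composite extensions fix a dense subset of a Hausdorff space, hence are identities, so the connecting maps are mutually inverse order-homeomorphisms. With that sentence supplied, your proof is complete.
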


\begin{convention}\label{convention}
To simplify notation, from now on we will drop the subscript from $(X_L,\pi_L,\le_L)$ and simply write $(X,\pi,\le)$. We will also abbreviate
$\tau_L$ by $\tau$. Similarly, we will write $Y$ instead of $Y_L$, so $(Y,\tau)$ is a subspace of $(X,\tau)$ and $(Y,\pi)$ is a subspace of
$(X,\pi)$. We will write $\cl$ for the closure in $(X,\pi)$. Since $\varepsilon : S \to (Y, \tau)$ is the soberification of $S$, we identify
$S$ with its image $\varepsilon[S]$ in $Y$, and view $S$ a subspace of $(Y, \tau)$.
\end{convention}

\begin{lemma} \label{lem: downsets in S}
Let $T$ be a subset of $S$.
\begin{enumerate}
\item If $A$ is an upset of $T$, then there is a clopen upset $U$ of $X$ with $A = U \cap T$.
\item If $B$ is a downset of $T$, then there is a clopen downset $V$ of $X$ with $B = V \cap T$.
\end{enumerate}
\end{lemma}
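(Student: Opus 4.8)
The plan is to reduce both statements to Priestley duality for the distributive lattice $L=\Op S$, together with one elementary closure argument. The key observation is that for every $a\in\Op S$ the set $\eta(a)$ is a clopen upset of $(X,\pi,\le)$, and that it restricts correctly to $S$: since $\varepsilon(s)\in\eta(a)$ iff $a\in\varepsilon(s)$ iff $s\in a$, under the identification of $S$ with $\varepsilon[S]$ we have $\eta(a)\cap S=a$, and hence $\eta(a)\cap T=a\cap T$ because $T\subseteq S$. So to prove (1) it suffices to find an upset $a$ of $S$ with $a\cap T=A$ and then set $U=\eta(a)$.

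For (1), given an upset $A$ of $T$, I would take $a=\up A$, the upward closure of $A$ in $S$; this is an upset of $S$, so $a\in\Op S$ and $U=\eta(a)$ is a clopen upset of $X$. The inclusion $A\subseteq a\cap T$ is clear, and conversely if $t\in\up A\cap T$ then $x\le t$ for some $x\in A\subseteq T$, whence $t\in A$ because $A$ is an upset of $T$; thus $a\cap T=A$ and $U\cap T=A$, as required. For (2) I would deduce the downset case from the upset case by complementation: if $B$ is a downset of $T$, then $T\setminus B$ is an upset of $T$, so by (1) there is a clopen upset $U'$ of $X$ with $U'\cap T=T\setminus B$; taking $V=X\setminus U'$, which is a clopen downset of $X$, gives $V\cap T=T\setminus(U'\cap T)=B$.

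The only step needing more than routine checking --- the ``main obstacle,'' such as it is --- is the assertion that $\eta(a)$ is clopen in the patch topology $\pi$ (it is manifestly $\tau$-open, being a basic open, but one must also know it is $\pi$-closed, i.e.\ a genuine clopen upset). This is precisely the content of Priestley duality: $a\mapsto\eta(a)$ is an isomorphism of $L$ onto the lattice of clopen upsets of the Priestley space $(X,\pi,\le)$, a fact already built into the description of $(X_L,\pi_L,\le_L)$ in the Preliminaries. I would invoke it directly rather than reprove it, after which both parts are immediate.
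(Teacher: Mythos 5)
Your proof is correct, but it takes a more direct route than the paper. The paper's proof has the same skeleton---first extend $A$ to an upset $A'$ of $S$, then realize $A'$ as the trace of a clopen upset of $X$---but it performs the second step by passing through $Y$: it writes $A'$ as $U'\cap S$ for some $\tau$-open $U'$ of $Y$ and then invokes an external lemma (\cite[Lem.~5.3]{ABMZ18a}) asserting that opens of $(Y,\tau)$ extend to clopen upsets of $X$; part (2) is then handled by repeating the argument with the downset version of that lemma. You instead bypass $Y$ entirely: you use the concrete extension $a=\up A$ (the paper merely asserts existence of some $A'$, and your verification that $\up A\cap T=A$ is the right one) and the duality unit $\eta$, noting that $\eta(a)$ is a clopen upset of $(X,\pi,\le)$ with $\eta(a)\cap S=a$ under the identification of $S$ with $\varepsilon[S]$ --- a fact the paper itself records via $\varepsilon^{-1}\eta(U)=U$ and the description of the bases of $\tau_L$ and $\pi_L$ (indeed $X\setminus\eta(a)=\eta(1)\setminus\eta(a)$ is basic $\pi$-open, so $\eta(a)$ is genuinely clopen). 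Your deduction of (2) from (1) by complementation is also cleaner than re-running the argument. What your approach buys is self-containedness: it needs only Priestley duality for $L=\Op S$ as set up in the Preliminaries, with no appeal to the companion paper's lemma about $Y$; what the paper's route buys is uniformity with the surrounding development, where $S\subseteq Y\subseteq X$ and the interplay between $(Y,\tau)$ and $(X,\pi)$ is the working framework for the later lemmas.
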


\begin{proof}
(1) Since $A$ is an upset of $T$, there is an upset $A'$ of $S$ with $A = A' \cap T$. Because $S$ is a subspace of $(Y, \tau)$, there is
an open subset $U'$ of $(Y, \tau)$ with $A' = U' \cap S$. By \cite[Lem.~5.3(1)]{ABMZ18a}, there is a clopen upset $U$ of $X$ with
$U' = U \cap Y$. Therefore, $A = U \cap T$.

(2) The proof is similar to (1), but uses \cite[Lem.~5.3(2)]{ABMZ18a}.
\end{proof}

Since the topology on $S$ is discrete, the next lemma can be thought of as an order-theoretic analogue of  \cite[Cor.~3.6.4]{Eng89}.

\begin{lemma} \label{lem: closure of a downset} \label{lem: max(D)} \label{lem: disjoint}
Let $D$ be a downset of $S$.
If $A,B \subseteq D$ with $A \cap B = \varnothing$ and $B$ an upset of $D$, then $\cl(A) \cap \cl(B) = \varnothing$.
\end{lemma}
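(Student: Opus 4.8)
The plan is to separate $A$ and $B$ by a single clopen subset of $X$, from which the disjointness of their closures is immediate. First I would invoke Lemma~\ref{lem: downsets in S}(1) with $T = D$: since $B$ is an upset of $D$, there is a clopen upset $U$ of $X$ with $B = U \cap D$. This is the step that carries the real content, converting the purely order-theoretic hypothesis that $B$ is an upset of $D$ into the topological statement that $B$ is the trace on $D$ of a clopen upset of $(X,\pi)$.

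Next I would verify that $A$ is disjoint from $U$. Because $A \subseteq D$, one has $A \cap U = A \cap (U \cap D) = A \cap B$, and the last set is empty by hypothesis. Hence $A \subseteq X \setminus U$ while $B \subseteq U$.

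Finally, since $U$ is clopen in $(X,\pi)$, both $U$ and $X \setminus U$ are closed, so $\cl(A) \subseteq X \setminus U$ and $\cl(B) \subseteq U$. Therefore $\cl(A) \cap \cl(B) \subseteq (X \setminus U) \cap U = \varnothing$, as desired.

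I do not anticipate a genuine obstacle: the argument is short once Lemma~\ref{lem: downsets in S}(1) is in hand. The only point requiring a moment of care is the membership bookkeeping $A \cap U = A \cap B$, which uses both $A \subseteq D$ and the equality $B = U \cap D$ (not merely an inclusion); it is precisely this that makes the single clopen set $U$ do double duty, containing $B$ and avoiding $A$.
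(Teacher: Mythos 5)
Your proof is correct, but it takes a genuinely different route from the paper's. You obtain the separation from Lemma~\ref{lem: downsets in S}(1) applied with $T = D$: the clopen upset $U$ of $X$ with $B = U \cap D$ satisfies $A \cap U = A \cap B = \varnothing$ because $A \subseteq D$ (and you are right to flag that the equality $B = U \cap D$, not merely $B \subseteq U$, is what makes $U$ avoid $A$), whence $\cl(B) \subseteq U$ and $\cl(A) \subseteq X \setminus U$ since $U$ is clopen in $(X,\pi)$. The paper instead runs the classical complete-separation argument for the Stone--\v{C}ech compactification in order-topological form, which is precisely the analogy with \cite[Cor.~3.6.4]{Eng89} announced just before the lemma: after checking that $(\down A \cap S) \cap (\up B \cap S) = \varnothing$, it defines the order-preserving map $f : S \to [0,1]$ taking value $0$ on $\down A \cap S$ and $1$ elsewhere, extends it to a continuous order-preserving $g : X \to [0,1]$ via the universal property of the Nachbin compactification (Theorem~\ref{thm: Nachbin}), and concludes from $\cl(A) \subseteq g^{-1}(0)$ and $\cl(B) \subseteq g^{-1}(1)$. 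Your argument is shorter, stays entirely within the toolkit the paper has already built (Lemma~\ref{lem: downsets in S}, which in turn rests on \cite[Lem.~5.3]{ABMZ18a}), and replaces the separating function by a single clopen upset --- in effect a crisper, Priestley-style version of the same separation, with $U$ playing the role of $g^{-1}(1)$. It also makes visible that the hypothesis that $D$ is a downset of $S$ is never used: your argument works for an arbitrary subset of $S$ (as, in fact, does the paper's; the hypothesis is there only because the lemma gets applied with $D = E \cap S$ for a clopen downset $E$). What the paper's route buys is independence from Lemma~\ref{lem: downsets in S}: its proof is self-contained modulo Theorem~\ref{thm: Nachbin} and makes the advertised analogy with the Stone--\v{C}ech extension argument explicit.
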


\begin{proof}
%
%
First observe that $(\down A \cap D) \cap B = \varnothing$. Otherwise there is $b \in (\down A \cap D) \cap B$, so $b \le a$ for some
$a \in A$.
Since $B$ is an upset of $D$, we have $a \in B$. Therefore, $a \in A \cap B$, which contradicts the assumption that $A,B$ are disjoint.
Next we show that $(\down A \cap S) \cap (\up B \cap S) = \varnothing$. If not, then there are
$b \in B$, $s \in S$, and $a \in A$ such that $b \le s \le a$. Since $a \in A \subseteq D$ and $B$ is an upset of $D$, we get $a \in B$,
which is false as $A \cap B = \varnothing$. Now, define $f : S \to [0,1]$ by $f(s) = 0$ if $s \in \down A \cap S$ and $f(s) = 1$ otherwise.
Clearly $f$ is order-preserving. Thus, since $X$ is the Nachbin compactification of $S$ (see Theorem~\ref{thm: Nachbin}), there is a
continuous order-preserving map $g : X \to [0,1]$ with $g|_S = f$. As $A \subseteq f^{-1}(0)$ and $B \subseteq f^{-1}(1)$, we then conclude that
$\cl(A) \subseteq g^{-1}(0)$ and $\cl(B) \subseteq g^{-1}(1)$, and hence $\cl(A) \cap \cl(B) = \varnothing$.
\end{proof}

In what follows we will make heavy use of the technique of nets and net convergence (see, e.g., \cite[Sec.~1.6]{Eng89}).
We recall that a \emph{net} in $X$ is a map $\mathfrak{n}$ from a directed set $\Gamma$ to $X$. We call a net $\mathfrak{n} : \Gamma \to X$
\emph{increasing} if $\gamma \le \delta$ implies $\mathfrak{n}(\gamma) \le \mathfrak{n}(\delta)$.

We call a subset $A$ of $X$ \emph{up-directed} if $A$ is a directed set with the induced order coming from $X$. If $A$ is up-directed,
then the inclusion function $A \to X$ is an increasing net in $X$.
Conversely, if $\mathfrak{n} : \Gamma \to X$ is an increasing net, then the image $\mathfrak{n}(\Gamma)$ is an up-directed subset of $X$.

\begin{lemma} \label{lem: limit is in Y}
Let $\mathfrak{n}$ be an increasing net in $Y$ converging to $x \in X$. Then $\mathfrak{n}(\Gamma) \subseteq \down x$ and $x \in Y$.
\end{lemma}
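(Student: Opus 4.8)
The plan is to treat the two assertions separately, working throughout in the compact Hausdorff space $(X,\pi)$ in which $\mathfrak{n}$ converges to $x$. For the inclusion $\mathfrak{n}(\Gamma)\subseteq\down x$ I would fix an index $\gamma_0\in\Gamma$ and examine the tail $(\mathfrak{n}(\delta))_{\delta\ge\gamma_0}$. Since $\{\mathfrak{n}(\gamma_0)\}$ is closed and the up-set of a closed set is closed, $\up\mathfrak{n}(\gamma_0)$ is $\pi$-closed; as $\mathfrak{n}$ is increasing, every term of this tail lies in $\up\mathfrak{n}(\gamma_0)$, and the tail still converges to $x$. Hence $x\in\up\mathfrak{n}(\gamma_0)$, that is $\mathfrak{n}(\gamma_0)\le x$. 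As $\gamma_0$ was arbitrary, $\mathfrak{n}(\Gamma)\subseteq\down x$.

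The remainder is cleanest once $x$ is identified concretely as a prime filter. Recall that the points of $X$ are the prime filters of $L$ and that $\le$ is inclusion. The image $\mathfrak{n}(\Gamma)$ is up-directed, so $P:=\bigcup_\gamma\mathfrak{n}(\gamma)$ is again a prime filter (a directed union of prime filters is a prime filter), hence a point of $X$, and $\down P$ is $\pi$-closed. Each $\mathfrak{n}(\gamma)\subseteq P$, so the net lies in $\down P$ and therefore $x\in\down P$, giving $x\subseteq P$; conversely the first part gives $\mathfrak{n}(\gamma)\subseteq x$ for all $\gamma$, so $P\subseteq x$. Thus $x=\bigcup_\gamma\mathfrak{n}(\gamma)$.

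To show $x\in Y$ I would first record a characterization of nuclear points. For a point $z\in X$ one has $z\in X\setminus\down x$ iff $z\not\subseteq x$ iff $z$ contains some $a\in L\setminus x$, so $X\setminus\down x=\bigcup_{a\in L\setminus x}\eta(a)$. Because $x$ is prime, $L\setminus x$ is closed under finite joins and $\eta(a\vee b)=\eta(a)\cup\eta(b)$, so this is an up-directed union of clopen sets. In the compact space $X$ such a union is clopen iff it equals one of its members, i.e.\ iff there is $a_0\in L\setminus x$ with $\eta(a)\subseteq\eta(a_0)$ for all $a\in L\setminus x$; since $\eta$ is an order embedding this says exactly that $L\setminus x$ has a greatest element. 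Recalling that $x$ is nuclear precisely when $\down x$ is clopen, we obtain the criterion: $z\in Y$ iff the prime ideal $L\setminus z$ has a greatest element, equivalently $L\setminus z=\down a$ with $a=\max(L\setminus z)$.

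Finally I would feed the description $x=\bigcup_\gamma\mathfrak{n}(\gamma)$ into this criterion. Each $\mathfrak{n}(\gamma)\in Y$, so $L\setminus\mathfrak{n}(\gamma)=\down a_\gamma$ with $a_\gamma=\max(L\setminus\mathfrak{n}(\gamma))$. Then
\[
L\setminus x=\bigcap_\gamma\bigl(L\setminus\mathfrak{n}(\gamma)\bigr)=\bigcap_\gamma\down a_\gamma=\down\Bigl(\bigwedge_\gamma a_\gamma\Bigr),
\]
where the last equality uses the completeness of $L$. Hence $L\setminus x$ has greatest element $\bigwedge_\gamma a_\gamma$, and so $x\in Y$ by the criterion. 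The one step needing genuine care---and which I regard as the crux---is the compactness argument that the up-directed union $\bigcup_{a\in L\setminus x}\eta(a)$ is clopen exactly when it is principal; everything else is bookkeeping with prime filters together with the closedness of $\up$ and $\down$ on points.
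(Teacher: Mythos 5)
Your proof is correct, but for the main assertion $x \in Y$ it takes a genuinely different route from the paper. (Your first part, deriving $\mathfrak{n}(\Gamma)\subseteq\down x$ from the tail lying in the closed set $\up\mathfrak{n}(\gamma_0)$, is essentially the paper's Priestley-separation argument in different clothing.) For $x \in Y$ the paper stays entirely topological: it sets $V = X\setminus\down x$, invokes extremal order-disconnectedness to get that $\cl(V)$ is a clopen upset, uses clopenness of $\down a$ for each point $a$ of the net to conclude $\cl(A)\cap\cl(V)=\varnothing$, and deduces $V=\cl(V)$, so $\down x$ is clopen. You instead unpack the duality: you identify $x$ algebraically as the directed union $\bigcup_\gamma \mathfrak{n}(\gamma)$ of prime filters (incidentally showing $x$ is the supremum of the net, a fact the paper only establishes separately in Lemma~\ref{lem: limit of an increasing net}), you characterize $Y$ via compactness as those prime filters $z$ for which $L\setminus z$ is a principal ideal (equivalently, the completely prime filters, consistent with Theorem~\ref{thm: Y}), and you finish with the completeness computation $\bigcap_\gamma \down a_\gamma = \down\bigl(\bigwedge_\gamma a_\gamma\bigr)$. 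All the steps check out: a directed union of prime filters is a proper prime filter; $X\setminus\down x = \bigcup_{a\in L\setminus x}\eta(a)$ is indeed an up-directed union of clopens because $L\setminus x$ is an ideal and $\eta(a\vee b)=\eta(a)\cup\eta(b)$; your flagged crux is sound, since a clopen subset of the compact space $X$ is compact and a directed open cover of it must have a single member covering it; and $\eta(a)\subseteq\eta(a_0)\Rightarrow a\le a_0$ is the standard prime-filter separation property underlying the representation. As for what each approach buys: the paper's proof uses only the structural fact that $X$ is an extremally order-disconnected Esakia space and stays uniform with its net-theoretic toolkit, whereas yours yields more explicit information (formulas for $x$ and for $\max(L\setminus x)$) and exposes the algebra behind the lemma --- indeed, once $x=\bigcup_\gamma\mathfrak{n}(\gamma)$ is established, you could shortcut your final computation by observing directly that a directed union of completely prime filters is completely prime.
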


\begin{proof}
Let $A=\mathfrak{n}(\Gamma)$. We first show that $A \subseteq \down x$. If not, then there is $a \in A$ with $a \not\le x$.
By the Priestley separation axiom, there is a clopen upset $U$ of $X$ with $a \in U$ and $x \notin U$. Since $X \setminus U$ is an open
neighborhood of $x$ and $\mathfrak{n}$ is
a net converging to $x$, there is $\gamma \in \Gamma$ such that for all $\delta \ge \gamma$, we have $\mathfrak{n}(\delta) \in X\setminus U$. Because $\mathfrak{n}$ is increasing, there is $\delta$ with $a, \mathfrak{n}(\gamma) \le \mathfrak{n}(\delta)$. This implies $\mathfrak{n}(\delta) \in X \setminus U$, which is impossible since $\mathfrak{n}(\delta) \in U$ as $U$ is an upset and $a \in U$. The obtained
contradiction proves that $A \subseteq \down x$.

We next show that $x \in Y$. Let $V = X \setminus \down x$, an open upset of $X$. Since $X$ is an extremally order-disconnected Esakia space, $\cl(V)$ is a clopen
upset. Let $a \in A$. Then $a \le x$, so $a \notin V$, and so $\down a \cap V = \varnothing$ because $V$ is an upset. Since $a\in Y$,
we have $\down a$ is clopen, so $\down a \cap \cl(V) = \varnothing$. This implies $A \cap \cl(V) = \varnothing$, and so
$\cl(A) \cap \cl(V) = \varnothing$ as $\cl(V)$ is clopen. Since $x \in \cl(A)$, we conclude that $x \notin \cl(V)$, and hence
$\down x \cap \cl(V) = \varnothing$. This implies that $V = \cl(V)$, so $V$ is clopen. Therefore, $\down x$ is clopen. Thus, $x \in Y$.
\end{proof}

\begin{lemma} \label{lem: limit of an increasing net}
Let $A$ be an up-directed subset of $Y$. Viewing $A$ as a net, $A$ converges to a point $y \in Y$ with $A \subseteq \down y$.
\end{lemma}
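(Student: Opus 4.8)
The plan is to reduce everything to Lemma~\ref{lem: limit is in Y}. Since $A$ is up-directed, the inclusion $A \hookrightarrow X$ is an increasing net in $Y$, so the moment I show that this net converges to some point $y \in X$, Lemma~\ref{lem: limit is in Y} immediately yields $A \subseteq \down y$ and $y \in Y$, which is exactly the assertion. Thus the entire content of the lemma is the \emph{existence of a limit} for this increasing net in the compact space $(X,\pi)$.

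First I would record the key interaction between monotonicity and the order topology: if $U$ is a clopen upset of $X$ and the net meets $U$ (that is, $a_0 \in U$ for some $a_0 \in A$), then the net is eventually in $U$, because every $a \ge a_0$ in $A$ satisfies $a \ge a_0 \in U$ and $U$ is an upset. Consequently, for every clopen upset $U$ the net is either eventually in $U$, or (when $A \cap U = \varnothing$) always in the clopen downset $X \setminus U$; dually, for every clopen downset $V$ the net is eventually in $V$ or eventually in $X \setminus V$.

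Next, since $(X,\pi)$ is compact Hausdorff, the net has a cluster point $y \in X$, and I would then show that $y$ is in fact a limit. Let $U$ be a clopen upset and $V$ a clopen downset with $y \in U \cap V$; since sets of this form constitute a basis of the Priestley space $X$, those containing $y$ form a neighborhood basis at $y$, so it suffices to show the net is eventually in $U \cap V$. Being a cluster point, the net is frequently in both $U$ and $V$. Frequency in $U$ produces some $a_0 \in U$, whence the net is eventually in $U$ by the observation above. Frequency in $V$ precludes the net being eventually in $X \setminus V$, so by the dichotomy the net is eventually in $V$. Combining the two using directedness of $A$, the net is eventually in $U \cap V$, and convergence to $y$ follows.

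The main obstacle is exactly the gap between having a cluster point (free from compactness) and having a genuine limit, since a priori an increasing net could cluster at several incomparable points. The monotonicity-versus-upset observation is what closes this gap: it upgrades ``frequently in a clopen upset'' to ``eventually in it,'' and the Priestley separation axiom guarantees enough clopen upsets and downsets to pin the net down to a single point. Once convergence is established, no further work is needed beyond invoking Lemma~\ref{lem: limit is in Y}.
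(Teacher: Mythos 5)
Your proof is correct. The overall skeleton matches the paper's---compactness produces a candidate limit, the key monotonicity observation (an increasing net that meets a clopen upset is eventually in it) upgrades it to a genuine limit, and Lemma~\ref{lem: limit is in Y} finishes---but the mechanism for producing the candidate is genuinely different. The paper extracts a convergent \emph{subnet} $\mathfrak{n}\circ\varphi$, applies Lemma~\ref{lem: limit is in Y} to the subnet's image $B$ to get $y \in Y$ with $B \subseteq \down y$, shows $y = \sup B$, uses cofinality of $B$ in $A$ to conclude $A \subseteq \down y$, and only then verifies convergence of the full net on the basis of sets $U \setminus V$ (eventually in $U$ by monotonicity; inside $X \setminus V$ because $A \subseteq \down y$). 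You instead take a \emph{cluster point} and close the frequently-versus-eventually gap with your dichotomy for clopen upsets and downsets, invoking Lemma~\ref{lem: limit is in Y} just once, at the end, for the original net. Your route buys a cleaner argument: no subnet bookkeeping (the map $\varphi$ and cofinality of its image) and no supremum digression; note also that the second branch of your downset dichotomy actually gives $A \subseteq V$ for every clopen downset $V$ containing $y$, so Priestley separation would hand you $A \subseteq \down y$ directly, with the lemma then needed only for $y \in Y$. What the paper's route buys in exchange is the extra fact, established in passing, that $y$ is the supremum of $A$ in $X$---not claimed in the statement, but conceptually informative about what the limit point is.
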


\begin{proof}
Let $\mathfrak{n} : \Gamma \to X$ be an increasing net in $X$ with $\mathfrak{n}(\Gamma) = A$. Since $X$ is compact, $\mathfrak{n}$ has a
convergent subnet $\mathfrak{n}\circ \varphi$ for some order preserving map $\varphi:\Lambda \to \Gamma$ whose image is cofinal in $\Gamma$.
Set $B = \mathfrak{n}(\varphi(\Lambda))$. Let $y$ be the limit of $B$. By Lemma~\ref{lem: limit is in Y}, $B \subseteq \down y$ and
$y \in Y$. We show that $y$ is the supremum of $B$ in $X$. Suppose $x$ is an upper bound of $B$. If $y \not\le x$, then the Priestley
separation axiom yields a clopen downset $V$ of $X$ containing $x$ but not $y$. Since $B \subseteq \down x$, we have $B \subseteq V$,
so $B \cap (X \setminus V) = \varnothing$, which is impossible because $X \setminus V$ is a neighborhood of $y$ and $y$ is the limit of $B$.
Thus, $y \le x$, and so $y$ is the supremum of $B$. Let $a \in A$. Since $B$ is cofinal in $A$, there is $b \in B$ with $a \le b$.
Consequently, $A \subseteq \down y$, and so $y$ is also the supremum of $A$.

We show that $y$ is the limit of $A$. Suppose that $W$ is an
open neighborhood of $y$. Then there are clopen upsets $U,V$ with $y \in U\setminus V \subseteq W$. Since $X \setminus V$ is a clopen downset
and $y \in X \setminus V$, it follows that $A \subseteq \down y \subseteq X \setminus V$. Since $U$ is an open neighborhood of $y$, there is $\lambda \in \Lambda$
such that if $\delta \ge \lambda$, then $\mathfrak{n}(\varphi(\delta)) \in U$.  Suppose that $\gamma \in \Gamma$ with $\gamma \ge \varphi(\lambda)$.
Since $\mathfrak{n}$ is increasing, $\mathfrak{n}(\gamma) \ge \mathfrak{n}(\varphi(\lambda))$. We have $\mathfrak{n}(\varphi(\lambda)) \in U$
and $U$ is an upset, so $\mathfrak{n}(\gamma) \in U$. Therefore, $\mathfrak{n}(\gamma) \in W$ for each $\gamma \ge \varphi(\lambda)$. Thus,
$\mathfrak{n}$ converges to $y$.
\end{proof}

\begin{lemma} \label{lem: updirected}
Let $E$ be a clopen downset of $X$ such that $\max E \cap Y = \varnothing$. If $A$ is a nonempty upset of $E \cap S$, then $A$ is not up-directed.
\end{lemma}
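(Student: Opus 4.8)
The plan is to argue by contradiction: assume the nonempty upset $A$ of $E\cap S$ is up-directed, and produce a point of $\max E\cap Y$, contradicting the hypothesis. Since $A\subseteq E\cap S\subseteq S\subseteq Y$ and $A$ is up-directed, Lemma~\ref{lem: limit of an increasing net} applies: viewing $A$ as an increasing net, it converges to some $y\in Y$ with $A\subseteq\down y$. As $E$ is clopen (hence closed) and $A\subseteq E$, the limit satisfies $y\in\cl(A)\subseteq E$, so $y\in E\cap Y$. Because $y\in Y$, the set $\down y$ is clopen, whence $\cl(A)\subseteq\down y$. The whole problem now reduces to a single claim: $y$ is a maximal point of $E$. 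Once this is established, $y\in\max E\cap Y$, contradicting $\max E\cap Y=\varnothing$.

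To prove maximality I would separate $A$ from its complement in $E\cap S$ by a Nachbin function, exactly as in the proof of Lemma~\ref{lem: disjoint}. Set $C=(E\cap S)\setminus A$; since $A$ is an upset of $E\cap S$, the set $C$ is a downset of $E\cap S$, and crucially no element of $C$ lies above any element of $A$. Define $f\colon S\to[0,1]$ by $f(s)=1$ if $s\ge a$ for some $a\in A$ and $f(s)=0$ otherwise; then $f$ is order-preserving, equals $1$ on $A$, and equals $0$ on $C$. Since $X$ is the Nachbin compactification of $S$ (Theorem~\ref{thm: Nachbin}), $f$ extends to a continuous order-preserving $g\colon X\to[0,1]$ with $g|_S=f$. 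Passing to closures, $\cl(A)\subseteq g^{-1}(1)$ and $\cl(C)\subseteq g^{-1}(0)$.

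The next key observation is that $E=\cl(E\cap S)$: indeed $S$ is dense in $(X,\pi)$ by Theorem~\ref{thm: compactification}, and $E$ is clopen, so $E\cap S$ is dense in the open set $E$. Hence $E=\cl(A)\cup\cl(C)$, and since $g^{-1}(0)$ and $g^{-1}(1)$ are disjoint this yields $E\cap g^{-1}(1)\subseteq\cl(A)$. Now take any $z\in E$ with $z\ge y$. Because $y\in\cl(A)$ we have $g(y)=1$, and as $g$ is order-preserving $g(z)=1$; thus $z\in E\cap g^{-1}(1)\subseteq\cl(A)\subseteq\down y$, forcing $z\le y$ and so $z=y$. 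Therefore $\up y\cap E=\{y\}$, i.e. $y\in\max E$, the desired contradiction.

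The step I expect to be the main obstacle is precisely the maximality of $y$: knowing only that $y$ is the supremum of $A$ does not by itself prevent points of $E\setminus\down y$ from sitting above $y$, since such points need not lie in $Y$, and a naive ``go up to a maximal point'' argument produces maximal points of $E$ outside $Y$. The decisive input is the upset structure of $A$ inside $E\cap S$, which is what allows the extended function $g$ to take the constant value $1$ on all of $\cl(A)=E\cap g^{-1}(1)$ while vanishing on $\cl(C)$; order-preservation of $g$ then rules out anything of $E$ lying strictly above $y$. In writing this up I would take care to verify that $g(y)=1$ really is forced (via $y\in\cl(A)$) and that density of $S$ genuinely gives $E=\cl(A)\cup\cl(C)$, since these are the two facts that make the separation argument close.
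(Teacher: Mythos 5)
Your proof is correct, and it shares its skeleton with the paper's: both view the up-directed upset $A$ as an increasing net, invoke Lemma~\ref{lem: limit of an increasing net} to obtain a limit $y \in Y$ with $\cl(A) \subseteq \down y$, and use density of $S$ in $X$ (Theorem~\ref{thm: compactification}) together with clopenness of $E$ to get $\cl(E \cap S) = E$. The endgame, however, is genuinely different. The paper first applies Lemma~\ref{lem: downsets in S}(1) to write $A = U \cap (E \cap S)$ for a clopen upset $U$ of $X$; then, since $y \in E \cap Y$ and $\max E \cap Y = \varnothing$, it picks $x \in E$ with $y < x$ and reaches the contradiction $x \in U \cap E = \cl(A) \subseteq \down y$. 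You never pass to a clopen upset: instead you prove outright that $y \in \max E$ by a Nachbin-function separation in the style of the paper's proof of Lemma~\ref{lem: disjoint}, extending the order-preserving indicator of $\up A \cap S$ to a continuous order-preserving $g : X \to [0,1]$ and using $E = \cl(A) \cup \cl(C)$ to force every $z \in E$ above $y$ into $\cl(A) \subseteq \down y$. The trade-off: the paper's clopen upset does the separating in one line, so its proof is shorter and reuses machinery already established; your route avoids Lemma~\ref{lem: downsets in S} and the ``every point of a closed set lies below a maximal point'' fact, at the cost of redoing the Nachbin extension argument, and it delivers the marginally stronger conclusion that the limit $y$ itself is maximal in $E$ rather than merely refuting a point of $E$ strictly above it. The two delicate steps you flag --- that $g(y) = 1$ is forced by $y \in \cl(A) \subseteq g^{-1}(1)$, and that $E = \cl(A) \cup \cl(C)$ follows from $\cl(E\cap S)=E$ --- both check out, as does the disjointness $\up A \cap C = \varnothing$ needed for $f$ to vanish on $C$, which is exactly where the hypothesis that $A$ is an upset of $E \cap S$ enters.
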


\begin{proof}
Let $D = E \cap S$ and let $A$ be a nonempty upset of $D$. By Lemma~\ref{lem: downsets in S}(1), $A = U \cap D$ for some clopen upset $U$ of $X$. Suppose that $A$ is up-directed. Then the inclusion map $A \to X$ is an increasing net $\mathfrak{n}$.
Lemma~\ref{lem: limit of an increasing net} implies that $\mathfrak{n}$ converges to a point $y \in Y$ such that $A \subseteq \down y$. As
$\max E \cap Y = \varnothing$, there is $x \in E$ with $y < x$. Because $U$ is a clopen upset containing $A$ and $A \subseteq \down y$,
we see that $y \in U$, so $x \in U$. Consequently, $x \in U \cap E$. By Theorem~\ref{thm: compactification}, $S$ is dense in $X$. Therefore, $\cl(D) = \cl(E \cap S) = E \cap \cl(S) = E$ as $E$ is clopen. Thus, $D$ is dense in $E$, and hence $A$ is dense in $U \cap E$.
From $A \subseteq \down y$ it follows that $\cl(A) \subseteq \down y$. Therefore,
$x \in U \cap E = \cl(A) \subseteq \down y$. This is impossible since $y < x$. The obtained contradiction proves that $A$ is not up-directed.
\end{proof}

\begin{lemma} \label{lem: existence of an increasing net}
Suppose that $D$ is a downset of $S$ and there is $x \in X$ with $x \in \cl(\down x \cap D)$. Then there is an increasing net in $D$
which converges to $x$.
\end{lemma}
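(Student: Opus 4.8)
The plan is to show that the set $A:=\down x\cap D$ is \emph{itself} an up-directed subset of $Y$ and that, viewed as an increasing net, it converges to $x$; since $A\subseteq D$, this net is the one we want. First I would note that $A\ne\varnothing$, for otherwise $x\in\cl(A)=\cl(\varnothing)=\varnothing$, which is absurd. The heart of the argument is then to prove that $A$ is up-directed, i.e.\ that any two elements have a common upper bound inside $A$.

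So let $s_1,s_2\in A$. The sets $\{t\in S:s_i\le t\}$ are upsets of $S$, so by Lemma~\ref{lem: downsets in S}(1) there are clopen upsets $U_1,U_2$ of $X$ with $U_i\cap S=\{t\in S:s_i\le t\}$. Since $s_i\in U_i$, since $s_i\le x$ (because $A\subseteq\down x$), and since $U_i$ is an upset, I get $x\in U_i$, and hence $x\in U_1\cap U_2$. As $U_1\cap U_2$ is a clopen, in particular open, neighborhood of $x$ and $x\in\cl(A)$, there is some $s_3\in A\cap U_1\cap U_2$. Then $s_3\in U_i\cap S=\{t\in S:s_i\le t\}$, so $s_3\ge s_1$ and $s_3\ge s_2$, while $s_3\in A$. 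Thus $s_3$ is an upper bound of $s_1,s_2$ in $A$, and $A$ is up-directed.

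Next, since $A\subseteq D\subseteq S\subseteq Y$, the set $A$ is a nonempty up-directed subset of $Y$, so Lemma~\ref{lem: limit of an increasing net} applies: the inclusion net of $A$ converges to a point $y\in Y$ with $A\subseteq\down y$. It remains to identify $y$ with $x$. Because $A\subseteq\down x$ and $A\subseteq\down y$, and because singletons are closed in $(X,\pi)$ so that $\down x$ and $\down y$ are closed (the downset of a closed set is closed), I have $\cl(A)\subseteq\down x$ and $\cl(A)\subseteq\down y$. Now $x\in\cl(A)$ by hypothesis and $y\in\cl(A)$ as the limit of a net taking values in $A$; hence $x\le y$ and $y\le x$, so $x=y$. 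Therefore the increasing net $A$ converges to $x$, and as $A\subseteq D$ this is the desired increasing net in $D$.

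The step I expect to be the main obstacle is the up-directedness of $A$: the whole proof hinges on converting the topological hypothesis $x\in\cl(\down x\cap D)$ into the order-theoretic statement that any two elements of $A$ have a common upper bound lying in $A$. The device that makes this work is Lemma~\ref{lem: downsets in S}(1), which lets me replace each principal upset of $S$ by a clopen upset of $X$ and thereby manufacture an \emph{open} neighborhood $U_1\cap U_2$ of $x$ whose intersection with $A$ furnishes the common upper bound. Once up-directedness is secured, Lemma~\ref{lem: limit of an increasing net} together with the closedness of principal downsets finishes the argument routinely.
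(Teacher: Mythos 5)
Your proof is correct, and its crux---showing that $A = \down x \cap D$ is up-directed by using Lemma~\ref{lem: downsets in S}(1) to produce a clopen (hence open) neighborhood $U_1 \cap U_2$ of $x$ that must meet $A$---is exactly the paper's argument (the paper works with the upsets $\up a \cap D$, $\up b \cap D$ of $D$ rather than of $S$, an immaterial difference). The only divergence is the finishing step: the paper verifies directly that the inclusion net of $A$ converges to $x$, using the basis of sets $U \setminus V$ with $U, V$ clopen upsets, whereas you invoke Lemma~\ref{lem: limit of an increasing net} to obtain a limit $y \in Y$ with $A \subseteq \down y$ and then identify $y = x$ by antisymmetry, using that $x \in \cl(A) \subseteq \down y$ and $y \in \cl(A) \subseteq \down x$ since principal downsets are closed. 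Both finishes are sound; yours is shorter and cleaner at the cost of leaning on the heavier, compactness-based Lemma~\ref{lem: limit of an increasing net} (whose applicability requires $A \subseteq Y$, which holds under the paper's identification of $S$ with $\varepsilon[S] \subseteq Y$), and it yields the bonus fact $x \in Y$, which the paper's direct verification does not need or produce.
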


\begin{proof}
Let $x \in \cl(\down x \cap D)$. Then there is a net $\mathfrak{n} : \Gamma \to \down x \cap D$ converging to $x$.
We build an increasing net in $D$ converging to $x$. Let $A = \down x \cap D$. Then all the terms of the net are in $A$. We show that
$A$ is up-directed. Let $a,b \in A$. Since $\up a \cap D$ and $\up b \cap D$ are upsets of $D$, by Lemma~\ref{lem: downsets in S}(1) there
are clopen upsets $U,V$ of $X$ with
$U \cap D = \up a \cap D$ and $V \cap D = \up b \cap D$. Because $a,b \le x$, we see that $x \in U \cap V$. Therefore,
$U \cap V$ is an open neighborhood of $x$. Thus, $\mathfrak{n}(\gamma) \in U \cap V$ for some $\gamma$. This implies that $a,b \le \mathfrak{n}(\gamma)$. Since
$\mathfrak{n}(\gamma) \in A$, this shows that $A$ is up-directed. We may then view $A$ as an increasing net.

We show that $A$ converges to $x$. Let $W$
be an open neighborhood of $x$. Then there are clopen upsets $U,V$ of $X$ with $x \in U \setminus V \subseteq W$. As $X \setminus V$ is
an open downset containing $x$ and $A \subseteq \down x$, we have $A \subseteq X \setminus V$. There is $\delta \in \Gamma$ such that if
$\gamma \ge \delta$, then $\mathfrak{n}(\gamma) \in U$. If $a \in A$ with $\mathfrak{n}(\delta) \le a$, then $a \in U$ since $U$ is an upset. Consequently,
for each $a \in A$ with $\mathfrak{n}(\delta) \le a$, we have $a \in W$. This shows that the net $A$ converges to $x$. We have thus produced an
increasing net in $D$ converging to $x$.
\end{proof}

\begin{lemma} \label{lem: limits and covering}
Let $A,B\subseteq Y$ and $A \subseteq \down B$. If $x$ is a limit point of $A$, then there is a limit point $y$ of $B$ with $x \le y$.
\end{lemma}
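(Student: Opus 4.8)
The plan is to argue entirely through nets, using compactness of $(X,\pi)$ together with the fact that the order $\le$ is closed in the product $X \times X$; this closedness is available because $(X,\pi,\le)$ is a Nachbin space by Theorem~\ref{thm: Nachbin}. Since $x$ is a limit point of $A$, it lies in $\cl(A)$, so I would first fix a net $\mathfrak{n} : \Gamma \to A$ converging to $x$. Because $A \subseteq \down B$, for each $\gamma \in \Gamma$ there is $b_\gamma \in B$ with $\mathfrak{n}(\gamma) \le b_\gamma$; choosing such a $b_\gamma$ for every $\gamma$ defines a net $\mathfrak{m} : \Gamma \to B$ satisfying $\mathfrak{n}(\gamma) \le \mathfrak{m}(\gamma)$ for all $\gamma$. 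Note that $\mathfrak{m}$ need not converge, which is precisely why compactness will be needed.

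Next, since $(X,\pi)$ is a Stone space and hence compact, the net $\mathfrak{m}$ has a convergent subnet $\mathfrak{m}\circ\varphi$, where $\varphi : \Lambda \to \Gamma$ is the accompanying monotone cofinal map; I would let $y$ be its limit. As every term of $\mathfrak{m}\circ\varphi$ lies in $B$, we get $y \in \cl(B)$, so $y$ is a limit point of $B$. The composite $\mathfrak{n}\circ\varphi$ is then a subnet of $\mathfrak{n}$ and therefore still converges to $x$. Consequently the net $\lambda \mapsto (\mathfrak{n}(\varphi(\lambda)), \mathfrak{m}(\varphi(\lambda)))$ in $X \times X$ converges to $(x,y)$, while each of its terms lies in the relation $\{(u,v) : u \le v\}$. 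Since this relation is closed in $X \times X$ (the Nachbin condition), its limit $(x,y)$ belongs to it, giving $x \le y$, as desired.

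The step requiring the most care will be the simultaneous passage to a subnet: one selects the reindexing $\varphi$ to force $\mathfrak{m}\circ\varphi$ to converge, and must then check that the \emph{same} cofinal reindexing keeps $\mathfrak{n}\circ\varphi$ convergent to $x$ and preserves the termwise inequality $\mathfrak{n}(\varphi(\lambda)) \le \mathfrak{m}(\varphi(\lambda))$. Once this bookkeeping is in place, the conclusion follows purely from the closedness of the order in a Nachbin space, so no appeal to the Priestley separation axiom or to nuclearity of points is needed; in particular the argument never uses $y \in Y$, only that $y$ is a limit of a net drawn from $B$.
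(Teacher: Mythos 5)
Your proposal is correct and follows essentially the same route as the paper: the same net $\mathfrak{n}:\Gamma\to A$ converging to $x$, the same choice net $\mathfrak{m}$ in $B$ with $\mathfrak{n}(\gamma)\le\mathfrak{m}(\gamma)$, and the same passage to a single convergent subnet $\mathfrak{m}\circ\varphi$ by compactness, with $\mathfrak{n}\circ\varphi$ still converging to $x$. The only (cosmetic) difference is the final step: the paper rules out $x\not\le y$ directly via the Priestley separation axiom, whereas you invoke closedness of $\le$ in $X\times X$ (available since $(X,\pi,\le)$ is a Nachbin space by Theorem~\ref{thm: Nachbin}), which is exactly what that separation argument establishes.
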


\begin{proof}
Since $x$ is a limit point of $A$ there is a net $\mathfrak{n} : \Gamma \to A$ converging to $x$. For each $\gamma$ choose $b_\gamma \in B$
with $\mathfrak{n}(\gamma) \le b_\gamma$. Define a net $\mathfrak{m} : \Gamma \to Y$ by $\mathfrak{m}(\gamma) = b_\gamma$.
Since $X$ is compact, there is a subnet $\mathfrak{m}\circ \varphi$ of $\mathfrak{m}$ converging to some $y \in X$,
where $\varphi : \Lambda \to \Gamma$ is order preserving and its image is cofinal in $\Gamma$. Then $y$ is a limit point of $B$.
Because $\mathfrak{n}$ converges to $x$, the subnet $\mathfrak{n} \circ \varphi$ also converges to $x$ (see, e.g., \cite[Prop.~1.6.1]{Eng89}).

Suppose $x \not\le y$. By the
Priestley separation axiom, there is a clopen upset $U$ containing $x$ and missing $y$.
Since $x \in U$ there is $\lambda \in \Lambda$ such that for each $\delta \ge \lambda$, we have $\mathfrak{n}(\varphi(\delta)) \in U$.
As $U$ is an upset, $\mathfrak{m}(\varphi(\delta)) \in U$. Because
$\mathfrak{m} \circ \varphi$ converges to $y$ and $y \in X\setminus U$, there is $\lambda'$ such that for each $\delta \ge \lambda'$ we have
$\mathfrak{m}(\varphi(\delta)) \in X\setminus U$. Then, for any $\delta \ge \lambda, \lambda'$, we have
$\mathfrak{m}(\varphi(\delta)) \in U\cap (X\setminus U)$, which is impossible. Thus, $x \le y$.
\end{proof}

Let $\T$ be the infinite binary tree shown below.

\newcommand\rad{.11}
\newcommand\srad{.05}
\newcommand\brad{.25}
\newcommand\hgt{3}
\newcommand\len{15pt}
\begin{figure}[H]
\begin{tikzpicture}[scale=.3]
\foreach \i in {0,...,2}
{\draw[fill] (5.5 + 10*\i, 5.3*\hgt) circle[radius=\srad];
\draw[fill] (5.5 + 10*\i, 5.5*\hgt) circle[radius=\srad];
\draw[fill] (5.5 + 10*\i, 5.7*\hgt) circle[radius=\srad];}

\foreach \i in {0,..., 31}
\draw[fill] (\i, 5*\hgt) circle[radius=\rad];

\foreach \i in {0,...,15}
{\draw[fill] (0.5 + 2*\i, 4*\hgt) circle[radius=\rad];
\draw (0.5 + 2*\i, 4*\hgt) -- (2*\i, 5*\hgt);
\draw (0.5 + 2*\i, 4*\hgt) -- (2*\i+1, 5*\hgt);}

\foreach \i in {0,..., 7}
{\draw[fill] (1.5 + 4*\i, 3*\hgt) circle[radius=\rad];
\draw (1.5 + 4*\i, 3*\hgt) -- (.5 + 4*\i, 4*\hgt);
\draw (1.5 + 4*\i, 3*\hgt) -- (.5 + 4*\i+2, 4*\hgt);}

\foreach \i in {0,...,3}
{\filldraw (3.5 + 8*\i, 2*\hgt) circle[radius=\rad];
\draw (3.5 + 8*\i, 2*\hgt) -- (1.5 + 8*\i, 3*\hgt);
\draw (3.5 + 8*\i, 2*\hgt) -- (1.5 + 8*\i+4, 3*\hgt);}

\foreach \i in {0,...,1}
{\filldraw (7.5 + 16*\i, 1*\hgt) circle[radius=\rad];
\draw (7.5 + 16*\i, 1*\hgt) -- (3.5 + 16*\i, 2*\hgt);
\draw (7.5 + 16*\i, 1*\hgt) -- (3.5 + 16*\i+8, 2*\hgt);}

\filldraw (15.5,0) circle[radius=\rad];
\draw (7.5, 1*\hgt) -- (15.5, 0);
\draw (23.5, 1*\hgt) -- (15.5, 0);
\end{tikzpicture}
\caption{The infinite binary tree $\T$}
\end{figure}
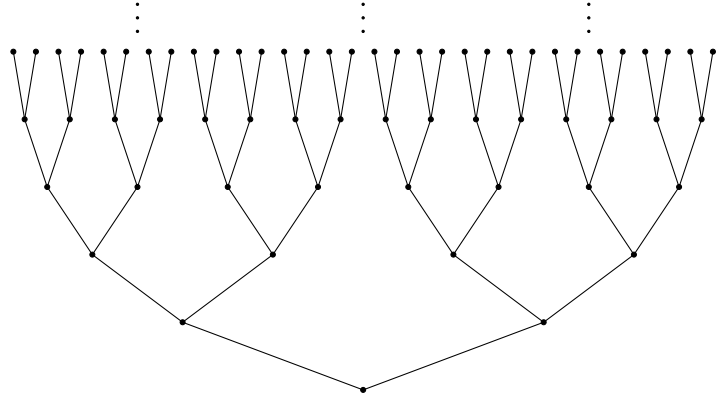
We think of $\T$ as built from \emph{combs} where a comb is depicted below.
\begin{figure}[H]
\begin{tikzpicture}[scale=.5]
\foreach \i in {0,...,5}
{\filldraw (2*\i,\i/5) circle[radius=\rad];
\filldraw (2*\i, 2+\i/5) circle[radius=\rad];
\draw (2*\i, \i/5) -- (2*\i, 2+\i/5);}
\foreach \i in {0,...,2}
{\filldraw (12 + \i, 1.2 +\i/10) circle[radius=\srad];
\filldraw (12 + \i, 3.2 + \i/10) circle[radius=\srad];}
\draw (0,0) -- (10, 1);
\end{tikzpicture}
\caption{A comb}
\end{figure}
Namely, we start with the root of $\T$ and build a comb with the ``round'' and ``square'' points drawn below. The round points form the ``spine'' of the comb and the square points the ``teeth'' of the comb. Then for each square point
we build a comb with the point as the root. Continuing this process yields $\T$.

\begin{figure}[H]
\begin{tikzpicture}[scale=.3]
\foreach \i in {0,...,2}
{\draw[fill] (5.5 + 10*\i, 5.3*\hgt) circle[radius=\srad];
\draw[fill] (5.5 + 10*\i, 5.5*\hgt) circle[radius=\srad];
\draw[fill] (5.5 + 10*\i, 5.7*\hgt) circle[radius=\srad];}

\foreach \i in {0,..., 31}
\draw[fill] (\i, 5*\hgt) circle[radius=\rad];

\foreach \i in {0,...,15}
{\draw[fill] (0.5 + 2*\i, 4*\hgt) circle[radius=\rad];
\draw (0.5 + 2*\i, 4*\hgt) -- (2*\i, 5*\hgt);
\draw (0.5 + 2*\i, 4*\hgt) -- (2*\i+1, 5*\hgt);}

\foreach \i in {0,..., 7}
{\draw[fill] (1.5 + 4*\i, 3*\hgt) circle[radius=\rad];
\draw (1.5 + 4*\i, 3*\hgt) -- (.5 + 4*\i, 4*\hgt);
\draw (1.5 + 4*\i, 3*\hgt) -- (.5 + 4*\i+2, 4*\hgt);}

\foreach \i in {0,...,3}
{\filldraw (3.5 + 8*\i, 2*\hgt) circle[radius=\rad];
\draw (3.5 + 8*\i, 2*\hgt) -- (1.5 + 8*\i, 3*\hgt);
\draw (3.5 + 8*\i, 2*\hgt) -- (1.5 + 8*\i+4, 3*\hgt);}

\foreach \i in {0,...,1}
{\filldraw (7.5 + 16*\i, 1*\hgt) circle[radius=\rad];
\draw (7.5 + 16*\i, 1*\hgt) -- (3.5 + 16*\i, 2*\hgt);
\draw (7.5 + 16*\i, 1*\hgt) -- (3.5 + 16*\i+8, 2*\hgt);}

\draw (7.5, 1*\hgt) -- (15.5, 0);
\draw (23.5, 1*\hgt) -- (15.5, 0);
\filldraw (15.5,0) circle[radius=\brad];
\filldraw (23.5,1*\hgt) circle[radius=\brad];
\filldraw ([xshift=-\len/2,yshift=-\len/2]7.5,1*\hgt) rectangle ++(\len,\len);
\filldraw (27.5,2*\hgt) circle[radius=\brad];
\filldraw ([xshift=-\len/2,yshift=-\len/2]19.5,2*\hgt) rectangle ++(\len,\len);
\filldraw (29.5,3*\hgt) circle[radius=\brad];
\filldraw ([xshift=-\len/2,yshift=-\len/2]25.5,3*\hgt) rectangle ++(\len,\len);
\filldraw (30.5,4*\hgt) circle[radius=\brad];
\filldraw ([xshift=-\len/2,yshift=-\len/2]28.5,4*\hgt) rectangle ++(\len,\len);
\filldraw (31,5*\hgt) circle[radius=\brad];
\filldraw ([xshift=-\len/2,yshift=-\len/2]30,5*\hgt) rectangle ++(\len,\len);
\end{tikzpicture}
\caption{A comb in $\T$}
\end{figure}

\begin{theorem}\label{thm:T_2}
$N(\Op \T)$ is not spatial.
\end{theorem}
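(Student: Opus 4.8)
The plan is to invoke Corollary~\ref{cor: spatial}: to see that $N(\Op \T)$ is not spatial it suffices to produce a single nonempty clopen downset $E$ of $X$ with $\max E \cap Y = \varnothing$, where $X$ is the Nachbin compactification of $\T$ (Theorem~\ref{thm: Nachbin}) and $Y$ is the set of nuclear points. I would take the most economical candidate, $E = X$ itself, which is trivially a nonempty clopen downset, and then prove that \emph{no} nuclear point is maximal in $X$.

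The first step is to identify $Y$. By Theorem~\ref{thm: Y}, $(Y,\tau)$ is the soberification of $\T$, whose points are the irreducible closed sets of $\T$, i.e.\ its nonempty directed downsets. In a tree any two elements with a common upper bound are comparable, so a directed subset is a chain; a downward closed chain is either ${\down}w$ for a node $w$ (these give the points of $\T \subseteq Y$) or an infinite branch $\beta$ (these give the extra limit points $y_\beta = \sup\{\beta|_n : n\}$). Thus $Y = \T \cup \{y_\beta : \beta \text{ a branch}\}$, and it remains to check that every such point is non-maximal in $X$.

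A node $w$ is trivially non-maximal, since its children $w0, w1 \in \T \subseteq X$ lie strictly above it. The heart of the matter is the branch case. Fix a branch $\beta$ with nodes $a_n = \beta|_n$, and let $\sigma_n$ be the sibling of $a_n$ (the other child of $a_{n-1}$); the $\sigma_n$ form an antichain running alongside $\beta$, exactly the teeth of the comb whose spine is $\beta$. Put $A = \{a_n : n\}$ and $B = \{\sigma_n : n\}$, both subsets of $Y$. Since $a_n \le \sigma_{n+1}$ we have $A \subseteq \down B$, and by Lemma~\ref{lem: limit of an increasing net} the increasing net $A$ converges to $y_\beta$, so $y_\beta$ is a limit point of $A$. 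Applying Lemma~\ref{lem: limits and covering} then yields a limit point $z$ of $B$ with $y_\beta \le z$.

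The step I expect to be the main obstacle is upgrading this to the strict inequality $y_\beta < z$. I would do so by separating $B$ from $\beta$ with a clopen upset: by Lemma~\ref{lem: downsets in S}(1) there is a clopen upset $U$ of $X$ with $U \cap \T = {\up}\{\sigma_n : n\}$. Every $\sigma_n$ lies in $U$, so $z \in \cl(B) \subseteq U$; on the other hand, no branch node $a_n$ lies above any sibling, whence $a_n \notin U$ for all $n$ and therefore $y_\beta \in \cl(A) \subseteq X \setminus U$. Thus $z \ne y_\beta$, and combined with $y_\beta \le z$ this gives $y_\beta < z$, so $y_\beta$ is not maximal. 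Hence $\max X \cap Y = \varnothing$, and Corollary~\ref{cor: spatial} shows that $N(\Op \T)$ is not spatial.
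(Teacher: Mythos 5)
Your proposal is correct, and it reaches the paper's conclusion ($\max X \cap Y = \varnothing$, then Corollary~\ref{cor: spatial}) by the same central device --- a branch as spine with its siblings as teeth, fed into Lemma~\ref{lem: limit of an increasing net} and Lemma~\ref{lem: limits and covering} --- but the surrounding organization is genuinely different. The paper argues by contradiction from an arbitrary $y \in \max X \cap Y$: since $\down y$ is clopen and $\T$ is dense (Theorem~\ref{thm: compactification}), $\down y = \cl(\down y \cap \T)$, so Lemma~\ref{lem: existence of an increasing net} extracts an increasing net in $\T$ converging to $y$, whose image is a chain serving as the spine; this way the paper never needs to know what $Y$ actually is. You instead enumerate $Y$ explicitly via Theorem~\ref{thm: Y} as $\T$ together with one point per branch and verify non-maximality point by point, which gives a more concrete picture (and handles the points of $\T$ trivially, where the paper's spine could degenerate to an eventually constant chain). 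The one step you should justify more carefully is the identification $Y = \T \cup \{y_\beta\}$ \emph{inside} $X$: Theorem~\ref{thm: Y} as stated is an abstract homeomorphism, and you need that the soberification points correspond to irreducible closed subsets of $\T$ (i.e., nonempty downward-closed chains) with the branch point realized in $X$ as the supremum/limit of its branch. This is true --- the paper's remark that $\varepsilon$ is the soberification unit pins down the correspondence, and exhaustiveness can alternatively be patched by exactly the paper's own device ($\down y = \cl(\down y \cap \T)$ plus Lemma~\ref{lem: existence of an increasing net}) --- but as written it is asserted rather than proved. Finally, for the strict inequality $y_\beta < z$ you inline a separation argument through a clopen upset from Lemma~\ref{lem: downsets in S}(1), which amounts to reproving the special case of Lemma~\ref{lem: disjoint} that the paper simply cites (with $A$ the spine and $B$ the upset generated by the teeth); both are valid, yours being marginally more self-contained and the paper's shorter. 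On balance, your route buys concreteness for trees at the cost of the soberification bookkeeping, while the paper's contradiction argument is the one that transfers unchanged to the converse direction of Theorem~\ref{thm: tree}.
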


\begin{proof}
Following Convention~\ref{convention}, we write $X$ for $X_{\Op\T}$ and $Y$ for $Y_{\Op\T}$. By Corollary~\ref{cor: spatial}, it is sufficient
to show that there is a clopen downset $E$ of $X$ such that $\max E\cap Y=\varnothing$; and we show that $\max X\cap Y=\varnothing$.
Since $\T$ is dense in $X$ by Theorem~\ref{thm: compactification}, we have that $X=\cl(\T)$. Suppose $y \in \max X \cap Y$.
Then there is a net in $\T$ converging to $y$. Since $y \in Y$, we have $\down y$ is clopen.
Therefore, $\down y = \cl(\down y \cap \T)$ (see the proof of Lemma~\ref{lem: updirected}).
Consequently, by Lemma~\ref{lem: existence of an increasing net}, there is an increasing net $\mathfrak{n}:\Gamma\to \T$ converging to $y$.
Let $A=\mathfrak{n}(\Gamma)$.
Then $A$ is an up-directed subset of $\T$, so $A$ is a chain in $\T$.
Consider the comb that has $A$ as the spine. Let $B$ be the upset generated by the teeth of the comb.
Then $A \cap B = \varnothing$ and $A \subseteq \down B$. Therefore, by Lemma~\ref{lem: limits and covering}, there is a limit point $x$
of $B$ with $y \le x$.
By Lemma~\ref{lem: disjoint},
$\cl(A) \cap \cl(B) = \varnothing$. Since $y \in \cl(A)$ and $x \in \cl(B)$, we conclude that $y \ne x$. Therefore, $y < x$,
which is a contradiction to $y \in \max X$. Thus, $\max X \cap Y = \varnothing$, which shows that $N(\Op \T)$ is not spatial by
Corollary~\ref{cor: spatial}.
\end{proof}

\begin{lemma} \label{lem: subposet}
Let $S$ be a poset and $T$ a subposet of $S$. If $N(\Op T)$ is not spatial, then neither is $N(\Op S)$.
\end{lemma}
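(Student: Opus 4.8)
The plan is to run Corollary~\ref{cor: spatial} at both ends. Non-spatiality of $N(\Op T)$ supplies a nonempty clopen downset $E$ of $X_T := X_{\Op T}$ with $\max E \cap Y_T = \varnothing$ (write $Y_T$ for the nuclear points of $X_T$, and $\cl_S,\cl_T$ for the patch closures in $X_S:=X_{\Op S}$ and $X_T$), and to prove $N(\Op S)$ non-spatial I will produce a nonempty clopen downset $V$ of $X_S$ with $\max V \cap Y_S = \varnothing$. Set $D_T = E \cap T$; since $T$ is dense in $X_T$ (Theorem~\ref{thm: compactification}) and $E$ is clopen, $D_T$ is a nonempty downset of $T$ with $\cl_T(D_T) = E$. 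Let $D_S = \down D_T$ be the downset of $S$ generated by $D_T$; note $D_S \cap T = D_T$ because $D_T$ is a downset of $T$. By Lemma~\ref{lem: downsets in S}(2) there is a clopen downset $V$ of $X_S$ with $V \cap S = D_S$; as $D_T \subseteq D_S$, the set $V$ is nonempty and $D_T \subseteq V$.

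To compare the two spectra I will use the embedding dual to restriction. The map $U \mapsto U \cap T$ is a surjective frame homomorphism $\Op S \to \Op T$, and by Priestley duality (equivalently, using that $X_S$ is the Nachbin compactification of $S$, Theorem~\ref{thm: Nachbin}) it is dual to a map $\iota : X_T \to X_S$ that is simultaneously an order embedding and a topological embedding for the patch topologies, has closed image, and restricts to the inclusion $T \hookrightarrow S$ under the identifications of Convention~\ref{convention}. Because $\iota$ is a homeomorphism onto its closed image, $\cl_S(D_T) = \iota[\cl_T(D_T)] = \iota[E]$; in particular every limit point of $D_T$ in $X_S$ lies in $\iota[E] \subseteq V$ and has the form $\iota(z')$ with $z' \in E$.

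Now suppose toward a contradiction that $y \in \max V \cap Y_S$. Since $\down y$ is clopen and contained in $V$, one gets $\down y \cap S = \down y \cap D_S$ and $y \in \cl_S(\down y \cap D_S)$, so Lemma~\ref{lem: existence of an increasing net} yields an increasing net in $D_S$ converging to $y$; let $A$ be its up-directed image, so $A \subseteq \down y$ and $A \subseteq \down D_T$. Applying Lemma~\ref{lem: limits and covering} with $B = D_T$ produces a limit point $z$ of $D_T$ with $y \le z$, and by the previous paragraph $z = \iota(z')$ for some $z' \in E$, with $z \in V$. If $y < z$, this contradicts $y \in \max V$. If $y = z = \iota(z')$, then the order-embedding property gives $z' \in \max E$ (any $e >_{X_T} z'$ would yield $\iota(e) \in \iota[E] \subseteq V$ with $\iota(e) > y$), while $\down z' = \iota^{-1}(\down y)$ is clopen in $X_T$, so $z' \in Y_T$; this contradicts $\max E \cap Y_T = \varnothing$. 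Hence $\max V \cap Y_S = \varnothing$, and $N(\Op S)$ is not spatial by Corollary~\ref{cor: spatial}.

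The main obstacle is the comparison map $\iota$ and, concretely, the backward transfer of nuclearity in the last step: I must know that a nuclear point of $X_S$ lying in $\iota[X_T]$ pulls back to a nuclear point of $X_T$. This works precisely because $\iota$ is an order and topological embedding, which gives $\down z' = \iota^{-1}(\down y)$ and hence clopenness of $\down z'$ from that of $\down y$. It is worth emphasizing that the forward transfer fails in general (a nuclear point of $X_T$ need not map to a nuclear point of $X_S$, since downsets formed in $X_S$ may acquire points outside $\iota[X_T]$); this is exactly why the argument is organized by choosing the witness $V$ on the $S$ side and pulling the offending maximal point back into $E$, rather than pushing $E$ forward.
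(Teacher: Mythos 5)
Your proof is correct, and it takes a genuinely different route from the paper's. The paper disposes of the lemma in two lines: restriction $U \mapsto U \cap T$ exhibits $\Op T$ as a quotient of $\Op S$, the proof of \cite[Lem.~3.4]{Sim89} then shows $N(\Op T)$ is isomorphic to an interval in $N(\Op S)$, and since an interval $[a,b]$ in a frame is a closed quotient of an open quotient (hence spatial whenever the ambient frame is), non-spatiality ascends from $N(\Op T)$ to $N(\Op S)$. You instead argue entirely on the spectral side: you dualize the same surjection to an order- and patch-topological embedding $\iota : X_T \to X_S$ with closed image extending $T \hookrightarrow S$ (a standard piece of Priestley duality, correctly invoked, though not proved in the paper), transport the witness $E$ to the nonempty clopen downset $V$ with $V \cap S = \down(E \cap T)$, and eliminate a putative $y \in \max V \cap Y_S$ by pushing it up via Lemma~\ref{lem: limits and covering} to a point of $\cl(E \cap T) = \iota[E] \subseteq V$ and, in the equality case, pulling nuclearity back through $\iota$. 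All the load-bearing steps check out: $D_S \cap T = D_T$ because $D_T$ is a downset of $T$; $\iota[E] \subseteq V$ because $V$ is closed and contains $D_T$; $z' \in \max E$ because $\iota$ is an order embedding into $V$; and $z' \in Y_T$ because $\down z' = \iota^{-1}(\down y)$ is clopen by patch continuity. Your closing observation --- that nuclearity transfers backward but not forward along $\iota$, which forces the witness to be built on the $S$ side --- is exactly the right structural point. One streamlining: the detour through Lemma~\ref{lem: existence of an increasing net} is unnecessary, since $\down y$ clopen and $S$ dense already make $y$ a limit point of $\down y \cap D_S \subseteq \down D_T$, which is all that Lemma~\ref{lem: limits and covering} requires (increasingness of the net is never used). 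What each approach buys: the paper's proof is far shorter and yields the stronger structural fact that $N(\Op T)$ sits as an interval in $N(\Op S)$, but outsources the key step to \cite{Sim89}; yours is longer but self-contained modulo textbook duality, stays within the Priestley-space machinery the paper has already built, and makes visible exactly where the spectra of $\Op T$ and $\Op S$ interact.
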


\begin{proof}
Since $T$ is a subposet of $S$, we see that $\Op T$ is a quotient of $\Op S$. It follows from the proof of \cite[Lem.~3.4]{Sim89} that
$N(\Op T)$ is isomorphic to an interval in $N(\Op S)$. Thus, spatiality of $N(\Op S)$ implies spatiality of $N(\Op T)$.
\end{proof}

We are ready to prove the main result of this paper.

\begin{theorem} \label{thm: tree}
Let $S$ be a poset. Then $N(\Op S)$ is not spatial iff $\T$ is isomorphic to a subposet of $S$.
\end{theorem}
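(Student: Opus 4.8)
The plan is to split the equivalence into its two implications, with all the real content concentrated in one direction. For the direction ``$\T$ embeds $\Rightarrow N(\Op S)$ is not spatial'' there is essentially nothing to do: if $\T$ is isomorphic to a subposet $T$ of $S$, then $N(\Op T)\cong N(\Op\T)$, which is not spatial by Theorem~\ref{thm:T_2}, and Lemma~\ref{lem: subposet} then transfers non-spatiality up to $N(\Op S)$. So I would dispatch this half in a sentence and devote the bulk of the argument to the converse.

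For the forward direction I would build a copy of $\T$ directly inside $S$. Assuming $N(\Op S)$ is not spatial, Corollary~\ref{cor: spatial} supplies a nonempty clopen downset $E$ of $X$ with $\max E\cap Y=\varnothing$. Set $D=E\cap S$; since $S$ is dense in $X$ by Theorem~\ref{thm: compactification} and $E$ is a nonempty open set, $D\neq\varnothing$. The engine is Lemma~\ref{lem: updirected}, which says that every nonempty upset of $D$ fails to be up-directed. First I would distill this into a branching principle: for each $p\in D$ the set $\up p\cap D$ is a nonempty upset of $D$, hence is not up-directed, so there are $a,b\in D$ with $a,b\ge p$ having no common upper bound in $D$. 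Such $a,b$ are necessarily incomparable and satisfy $a,b>p$, for any comparability between them (or equality of either with $p$) would immediately produce a common upper bound.

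With the branching principle available I would define points $x_\sigma\in D$ indexed by the finite binary strings $\sigma\in\{0,1\}^{*}$, which are exactly the carrier of $\T$ under the prefix order $\preceq$. Pick a root $x_\sigma$ for $\sigma$ the empty string arbitrarily, and given $x_\sigma$ apply the branching principle at $p=x_\sigma$ to get $x_{\sigma 0},x_{\sigma 1}\ge x_\sigma$ with no common upper bound in $D$. I would then check that $\sigma\mapsto x_\sigma$ is an order-embedding. If $\sigma\preceq\tau$, chaining the inequalities along the construction gives $x_\sigma\le x_\tau$. Conversely, if $\sigma\not\preceq\tau$, let $\rho$ be their longest common prefix; then (after possibly swapping) $\sigma$ extends $\rho 0$ and $\tau$ extends $\rho 1$, so $x_\sigma\ge x_{\rho 0}$ and $x_\tau\ge x_{\rho 1}$. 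Were $x_\sigma$ and $x_\tau$ comparable, the larger of the two would be a common upper bound of $x_{\rho 0}$ and $x_{\rho 1}$ in $D$, contradicting their choice; hence they are incomparable. This also gives injectivity (together with strictness along prefix chains), so $\sigma\mapsto x_\sigma$ exhibits $\T$ as a subposet of $S$.

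The step I expect to be the crux, and would guard most carefully, is the faithfulness of the embedding for nodes in different subtrees. A naive construction using only that sibling nodes are \emph{incomparable} breaks down, because incomparability does not propagate upward through the tree. The correct strengthening is ``no common upper bound in $D$,'' which \emph{does} propagate: any comparability between a descendant of $x_{\rho 0}$ and a descendant of $x_{\rho 1}$ would manufacture a common upper bound of $x_{\rho 0}$ and $x_{\rho 1}$. Thus the key insight is that the failure of up-directedness in Lemma~\ref{lem: updirected} yields exactly this stronger property rather than mere incomparability, and it is precisely this that makes the otherwise elementary recursion deliver a genuine copy of $\T$.
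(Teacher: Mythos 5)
Your proposal is correct and follows essentially the same route as the paper: both halves use Theorem~\ref{thm:T_2} with Lemma~\ref{lem: subposet} for the easy direction, and Corollary~\ref{cor: spatial} plus Lemma~\ref{lem: updirected} to extract the branching principle (two points above any given point of $D$ with no common upper bound in $D$) for the hard direction. The only difference is bookkeeping: the paper organizes the recursion comb-by-comb (spine and teeth, then checking the teeth form an antichain and the subtree combs are disjoint), while you index the construction directly by binary strings and verify incomparability via longest common prefixes --- the same ``no common upper bound propagates upward'' observation you correctly identify as the crux.
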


\begin{proof}
First suppose that $\T$ is isomorphic to a subposet of $S$. Then Theorem~\ref{thm:T_2} and Lemma~\ref{lem: subposet} yield that $N(\Op S)$
is not spatial. Conversely, suppose that $N(\Op S)$ is not spatial. Then Corollary~\ref{cor: spatial} gives a nonempty clopen downset $E$ of $X$ with
$\max E \cap Y = \varnothing$. Let $D = E \cap S$. By Lemma~\ref{lem: updirected}, each nonempty upset of $D$ is not up-directed. In particular,
for each $x \in D$ the upset $\up x \cap D$ of $D$ is not up-directed.  Therefore, there are $y,z \in D$ with $x \le y, z$ but no $w \in D$ with
$y,z \le w$. We build a copy of $\T$ inside $D$  by first building a comb inside $D$.

Let $x_0 \in D$. Then there are $x_1, y_0 \in D$ with
$x_0 \le x_1, y_0$ such that nothing in $D$ is above both $x_1, y_0$. Repeating this construction, for each $n$ we produce $x_n \in D$ and $x_{n+1}, y_n  \in D$
with $x_n \le x_{n+1}, y_n$ such that nothing in $D$ is above both $x_{n+1}$ and $y_n$. We claim that $C = \{ x_n, y_n \mid n \in \mathbb{N} \}$
is a comb inside $D$. By construction, $x_0 < x_1 < \cdots$ is a chain in $D$, and $x_i \le y_i$ for each $i$. We need to show that
$\{ y_n \mid n \in \mathbb{N} \}$ is an antichain. Assume that there are $i \ne j$ with $y_i \le y_j$. First suppose that $i < j$.
The element $y_j$ is above both $y_i$ and $x_j$. Since $i < j$ and $\{x_n\}$ is an increasing chain, $y_j$ is above both $y_i$ and $x_{i+1}$.
This is impossible by construction. Next, suppose that $j < i$. Then $y_j \ge y_i \ge x_i \ge x_{j+1}$. This is false by construction.
Thus, $C$ is indeed a comb in $D$. By repeating this construction, we can build a comb in $D$ rooted at each $y_n$.

To see that
the resulting poset is $\T$, if $i < j$, then we show that the combs rooted at $y_i$ and $y_j$ are disjoint. Suppose otherwise. Then there is
$a \in D$ above both $y_i$ and $y_j$. Therefore, $a$ is above both $y_i$ and $x_j$. Since $i < j$, we have $x_{i+1} \le x_j \le a$. Thus, $a$
is above both $x_{i+1}$ and $y_i$, a contradiction. Hence, the combs above $y_i$ and $y_j$ are disjoint. The resulting subposet of $D$ is
then isomorphic to $\T$, completing the proof.
\end{proof}

\section{Consequences of the Main Theorem} \label{sec: main}

We conclude the paper by deriving some consequences of Theorem~\ref{thm: tree}. First we derive a characterization of when $N(\Op S)$ is
spatial for an arbitrary Alexandroff space. Let $S$ be an Alexandroff space, which we will view as a preordered set.
Let $S_0$ be the skeleton ($T_0$-reflection) of $S$ and let $\rho:S\to S_0$ be the corresponding map sending $x\in S$ to $[x]\in S_0$.
Then $\rho^{-1}:\Op S_0\to \Op S$ is an isomorphism of frames (see the beginning of Section~3).

\begin{corollary}
For a preorder $S$ the following are equivalent.
\begin{enumerate}
\item $N(\Op S)$ is not spatial.
\item $\T$ is isomorphic to a subposet of $S_0$.
\item $\T$ embeds isomorphically into $S$.
\end{enumerate}
\end{corollary}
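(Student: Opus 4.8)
The plan is to reduce everything to the poset case already settled by Theorem~\ref{thm: tree}, and then transfer between $S$ and its skeleton $S_0$ along the quotient map $\rho$. First I would establish the equivalence of (1) and (2). Since $\rho^{-1}:\Op S_0\to\Op S$ is a frame isomorphism, it induces an isomorphism $N(\Op S_0)\cong N(\Op S)$; in particular $N(\Op S)$ is spatial iff $N(\Op S_0)$ is spatial. Because $S_0$ is a poset, Theorem~\ref{thm: tree} applies and gives that $N(\Op S_0)$ is not spatial iff $\T$ is isomorphic to a subposet of $S_0$. Combining these two facts yields (1)$\Leftrightarrow$(2).

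It then remains to prove (2)$\Leftrightarrow$(3), where ``embeds isomorphically'' means that there is a map $f:\T\to S$ with $a\le b$ iff $f(a)\le f(b)$; since $\T$ is a poset, any such $f$ is automatically injective. For (3)$\Rightarrow$(2), I would post-compose such an $f$ with $\rho$ and check that $\rho\circ f:\T\to S_0$ is an order-embedding. Indeed, for $a,b\in\T$ we have $\rho(f(a))\le_0\rho(f(b))$ iff $f(a)\le f(b)$, by the definition of $\le_0$ on $S_0$, and this holds iff $a\le b$ because $f$ is an embedding; injectivity of $\rho\circ f$ then follows from antisymmetry in $\T$. Hence $\T$ is isomorphic to the subposet $(\rho\circ f)[\T]$ of $S_0$.

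For the converse (2)$\Rightarrow$(3), I would pass through a section of $\rho$. Since $\rho$ is surjective, for each class $c\in S_0$ choose a representative $\sigma(c)\in S$ with $\rho(\sigma(c))=c$. The point to verify is that this section reflects order: because $\sigma([x])$ lies in the $\sim$-class $[x]$, we have $\sigma([x])\sim x$ for every $x$, so $\sigma([x])\le\sigma([y])$ iff $x\le y$ iff $[x]\le_0[y]$. Thus $\sigma$ is an order-embedding of $S_0$ into $S$, and composing an isomorphism $\T\cong T$ onto a subposet $T\subseteq S_0$ with $\sigma|_T$ yields an isomorphic embedding of $\T$ into $S$.

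I expect the only delicate point to be the bookkeeping around the preorder/poset distinction, in particular confirming that the section $\sigma$ reflects order and that post-composition with $\rho$ preserves injectivity; but each of these reduces to the defining property $[x]\le_0[y]\Leftrightarrow x\le y$ of the skeleton together with antisymmetry of $\T$. The substantive mathematics is entirely contained in Theorem~\ref{thm: tree}, so the corollary is essentially a transfer argument along the frame isomorphism $\rho^{-1}$ and the order-embedding $\sigma$.
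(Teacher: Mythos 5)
Your proposal is correct and follows essentially the same route as the paper: the equivalence (1)$\Leftrightarrow$(2) via the frame isomorphism $\rho^{-1}:\Op S_0\to\Op S$ together with Theorem~\ref{thm: tree}, and (2)$\Leftrightarrow$(3) by choosing representatives (a section of $\rho$) in one direction and pushing forward along $\rho$ in the other. Your extra verifications that the section reflects order and that $\rho\circ f$ is injective (via antisymmetry of $\T$) are exactly the details the paper leaves implicit, so there is nothing to correct.
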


\begin{proof}
(1)$\Leftrightarrow$(2). Since $N(\Op S)$ is isomorphic to $N(\Op S_0)$, we have that $N(\Op S)$ is not spatial iff $N(\Op S_0)$ is not spatial.
Now apply Theorem~\ref{thm: tree}.

(2)$\Leftrightarrow$(3). Suppose that $\T$ is isomorphic to a subposet of $S_0$. We may identify $\T$ with its image in $S_0$. For each $t \in \T$ choose $s_t \in \rho^{-1}(t)$. Then sending $t$ to $s_t$ is the desired embedding of $\T$ into $S$. Conversely, suppose $\T$ embeds isomorphically into $S$. We may identify $\T$ with its image in $S$. Then $\rho(\T)$ is a subposet of $S_0$ isomorphic to $\T$.
\end{proof}

We next recall that a poset $S$ is \emph{noetherian} if $S$ has no infinite ascending chains. If $S$ is noetherian,
it is clear that $\T$ does not embed in $S$. Therefore, Theorem~\ref{thm: tree} yields that $N(\Op S)$ is spatial.
In fact, $S$ is a noetherian poset iff $S$, viewed as an Alexandroff space, is scattered. Therefore, Simmons's well-known
theorem \cite[Thm.~4.5]{Sim80} implies that $N(\Op S)$ is moreover boolean.

It is natural to ask whether there exist posets $S$ such that $N(\Op S)$ is spatial, but not boolean. Isbell's theorem \cite{Isb91},
that for a sober space $S$ the frame $N(\Op S)$ is spatial iff $S$ is weakly scattered, does not resolve this question since for a
poset $S$, the concepts of sober, weakly scattered, and scattered are all equivalent to $S$ being noetherian.
We show that Theorem~\ref{thm: tree} resolves this question in the positive by providing many such examples.
We recall that a poset $S$ is \emph{totally ordered} if it is a chain; that is, $s\le t$ or $t\le s$ for all $s,t\in S$.

\begin{corollary} \label{thm: chain1}
Let $S$ be a poset.
\begin{enumerate}
\item If $S$ has no infinite antichains, then $N(\Op S)$ is spatial.
\item If $S$ is totally ordered, then $N(\Op S)$ is spatial.
\end{enumerate}
\end{corollary}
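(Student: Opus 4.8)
The plan is to deduce both statements directly from the main result, Theorem~\ref{thm: tree}, by showing that the hypotheses on $S$ obstruct any isomorphic copy of $\T$ as a subposet. The crucial observation is that $\T$ itself contains an infinite antichain. Recall from the construction recalled in the proof of Theorem~\ref{thm: tree} that $\T$ is built from combs, where a comb has a spine $x_0 < x_1 < x_2 < \cdots$ together with teeth $y_0, y_1, y_2, \ldots$ satisfying $x_i \le y_i$, and that the teeth form an antichain. Since the comb rooted at the root of $\T$ already supplies infinitely many pairwise incomparable teeth, $\T$ has an infinite antichain. Consequently, any poset into which $\T$ embeds isomorphically must itself contain an infinite antichain.

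For part (1), I would argue by contraposition. Suppose $S$ has no infinite antichains. If $\T$ were isomorphic to a subposet of $S$, then the image of the teeth $\{y_n \mid n \in \mathbb{N}\}$ of the root comb would be an infinite antichain in $S$, contradicting the hypothesis. Hence $\T$ is not isomorphic to a subposet of $S$, and Theorem~\ref{thm: tree} immediately yields that $N(\Op S)$ is spatial.

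For part (2), I would simply note that a totally ordered set is a chain, so it has no antichain of cardinality greater than one and in particular no infinite antichain. Thus part (2) is a special case of part (1), and $N(\Op S)$ is spatial.

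There is essentially no technical obstacle here: the entire argument rests on the single observation that the teeth of a comb form an infinite antichain, which is already verified within the proof of Theorem~\ref{thm: tree}. The only point meriting a line of care is confirming that the teeth of the root comb are genuinely pairwise incomparable in $\T$, but this follows directly from the antichain property of the teeth established in that construction.
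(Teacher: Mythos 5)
Your proof is correct and is essentially the paper's own argument: the paper likewise notes that $\T$ has infinite antichains, so it cannot be isomorphic to a subposet of $S$, and then invokes Theorem~\ref{thm: tree}, with (2) reduced to (1) exactly as you do. Your explicit verification that the teeth of a comb form an infinite antichain is a harmless elaboration of a fact the paper takes for granted.
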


\begin{proof}
(1) Suppose that $S$ has no infinite antichains. Since $\T$ has infinite antichains, $\T$ cannot be isomorphic to a subposet of $S$.
Thus, $N(\Op S)$ is spatial by Theorem~\ref{thm: tree}.

(2) If $S$ is totally ordered, then $S$ has no infinite antichains. Now apply (1).
\end{proof}

Consequently,
for each totally ordered set $S$ (or more generally for each poset $S$ with no infinite antichains), if $S$ is not noetherian, then
$N(\Op S)$ is spatial, but not boolean.

\begin{remark}
The converse of Corollary~\ref{thm: chain1}(1) is clearly false. For example, if $S$ is an infinite antichain, then $N\Op S$ is spatial
by Theorem~\ref{thm: tree} since $\T$ is not isomorphic to a subposet of $S$.
\end{remark}

By Theorem~\ref{thm: spatial} and Corollary~\ref{thm: chain1}(2), for a totally ordered set $S$, the frame $N(\Op S)$ is isomorphic to the frame
of opens of $(Y, \pi)$. As we pointed out at the end of Section~2, $(Y,\pi)$ is a zero-dimensional Haudorff space. As our final result, we determine when
$(Y, \pi)$ is compact, and hence a Stone space. Recall that a poset $S$ is \emph{artinian} if there are no infinite descending chains in $S$; equivalently, if every nonempty subset of $S$ has a minimum.

\begin{theorem} \label{thm: chain2}
For a totally ordered set $S$, the following are equivalent.
\begin{enumerate}
\item $Y=X$.
\item $S$ is artinian.
\item $(Y, \pi)$ is compact.
\end{enumerate}
\end{theorem}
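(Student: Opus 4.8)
The plan is to reduce everything to the order structure of $X$. First I would record a few structural facts. Since $S$ is a chain, its frame of upsets $\Op S$ is a chain (any two upsets of a chain are comparable), and in a chain every proper filter is prime and the proper filters are linearly ordered by inclusion; hence $(X,\le)$ is itself a chain. By Theorem~\ref{thm: compactification}, $S$ is $\pi$-dense in $X$, and since $S\subseteq Y\subseteq X$, also $Y$ is $\pi$-dense in $X$. For $s\in S$ the set $\down s$ is $\pi$-clopen by Remark~\ref{rem: S in Y}, and a point $x$ lies in $Y$ exactly when $\down x$ is $\pi$-clopen (this is the definition of a nuclear point applied to the singleton $\{x\}$). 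Finally, since $S$ is totally ordered, $N(\Op S)$ is spatial by Corollary~\ref{thm: chain1}, so Corollary~\ref{cor: spatial} applied to the clopen downset $X$ gives $\max X\cap Y\ne\varnothing$; as $X$ is a chain, $\max X=\{M\}$ is a single point, whence $M\in Y$.

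Next I would dispatch the equivalence of (1) and (3). The implication (1)$\Rightarrow$(3) is immediate: if $Y=X$ then $(Y,\pi)=(X,\pi)$ is a Stone space, hence compact. For (3)$\Rightarrow$(1), note that $(X,\pi)$ is Hausdorff, so a compact subspace is closed; thus if $(Y,\pi)$ is compact then $Y$ is $\pi$-closed in $X$, and being also $\pi$-dense, we conclude $Y=\cl(Y)=X$.

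The heart of the argument is (1)$\Leftrightarrow$(2). For (2)$\Rightarrow$(1), assume $S$ is artinian and fix $x\in X$; I must show $\down x$ is clopen, i.e.\ that $x\in Y$. If no $s\in S$ satisfies $s>x$, then $x$ is an upper bound of the dense set $S$, which (using that $X$ is a chain with top $M$) forces $x=M\in Y$. Otherwise, artinianness gives $m:=\min\{s\in S\mid s>x\}$, and I claim $\down x=\{z\in X\mid z<m\}$. One inclusion is clear since $x<m$. For the reverse, observe that $\{z<m\}\setminus\down x=\{z\mid x<z<m\}$ is the intersection of the two open sets $X\setminus\down x$ (open because $\down x$ is closed) and $\down m\setminus\{m\}$ (open because $\down m$ is clopen and $\{m\}$ is closed); if this set were nonempty it would meet $S$ by density, producing $s\in S$ with $x<s<m$ and contradicting the minimality of $m$. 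Hence $\down x=\{z<m\}$ is open, and being also closed it is clopen, so $x\in Y$. For (1)$\Rightarrow$(2), I argue contrapositively: if $S$ is not artinian, pick an infinite descending chain $s_0>s_1>\cdots$ and set $x=\min\cl\{s_n\mid n\in\mathbb N\}$, which is a single point because $X$ is a chain and nonempty closed sets have minima. Then $x\le s_n$, and $x\ne s_n$ (otherwise $s_N=x\le s_{N+1}<s_N$), so $x<s_n$ for all $n$. If $x$ were in $Y$ then $\down x$ would be clopen, so $X\setminus\down x$ would be closed and contain every $s_n$, hence contain $\cl\{s_n\}\ni x$; but $x\in\down x$, a contradiction. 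Therefore $x\notin Y$ and $Y\ne X$.

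The step I expect to be the main obstacle is (2)$\Rightarrow$(1): the crux is proving $\down x$ is clopen for an arbitrary point $x$ of the compactification, which is exactly where I must combine artinianness (to produce the immediate successor $m$ of $x$ inside $S$), density of $S$ in $X$ (to rule out points strictly between $x$ and $m$), and the clopenness of $\down m$. The other delicate point is establishing at the outset that $(X,\le)$ is a chain, since comparability of points is used repeatedly in both directions; everything else reduces to standard compactness and density arguments.
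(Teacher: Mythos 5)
Your proposal is correct and takes essentially the same route as the paper: the same three implications, with (3)$\Rightarrow$(1) via compact-plus-dense in the Hausdorff space $(X,\pi)$, (1)$\Rightarrow$(2) via the minimum $x$ of $\cl\{s_n\}$ whose downset cannot be clopen, and (2)$\Rightarrow$(1) via the minimum $m$ of $\{s\in S\mid s>x\}$ together with density of $S$ to show the interval between $x$ and $m$ is empty, so that $\down x$ is clopen. The only (harmless) differences are cosmetic: you prove that $X$ is a chain directly from the filter structure of the chain frame $\Op S$ where the paper cites Esakia's theorem that the closure of a chain is a chain, and you handle the top of $X$ via spatiality and Corollary~\ref{cor: spatial}, where the paper simply notes $\down x = X$ is clopen.
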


\begin{proof}
(1) $\Rightarrow$ (2). Let $A$ be an infinite descending chain $a_0 > a_1 > \cdots$ in $S$. Then the closure of $A$ in $X$ is also a chain (\cite[Thm.~III.2.9]{Esa85}). Therefore, it has a unique minimum $x \in X$, and $x \notin A$ since $A$ is infinite. Thus, $\down x \cap A = \varnothing$. This implies that $\down x$ is not clopen, so $x \notin Y$. This is impossible since $Y = X$.
Thus, $S$ is artinian.

(2) $\Rightarrow$ (3). Since $X$ is compact, it is sufficient to show that $X=Y$. Because $S$ is a chain and $S$ is dense in $X$, we have that $X$ 
is a chain (see \cite[Thm.~III.2.9]{Esa85}). Let $x \in X$. If $x$ is the maximum of $X$, then $\down x = X$, so $\down x$
is clopen, and hence $x \in Y$. Suppose not. Since $x$ is not the maximum of $X$, the set $\up x \setminus \{x\}$ is nonempty. Because $X$ is a 
chain, $\up x \setminus \{x\} = X \setminus \down x$. Therefore, $\up x \setminus \{x\}$ is a nonempty open subset of $X$. Thus, 
$(\up x \setminus \{x\}) \cap S \ne \varnothing$. Since $S$ is artinian, $(\up x \setminus \{x\}) \cap S$ has
a minimum $s$. Because $X$ is a chain, $(x,s) = X \setminus (\down x \cup \up s)$. Therefore, the interval $(x,s)$ is open. 
It misses $S$, so $(x,s) = \varnothing$ since $S$ is dense in $X$.
Therefore, $\down x = X \setminus \up s$, so $\down x$ is open. Thus, $\down x$ is clopen, yielding $x \in Y$.

(3) $\Rightarrow$ (1). If $Y$ is compact, then $Y = X$ since $Y$ is a closed dense subset of $X$.
\end{proof}

\begin{remark}
If $S$ is an artinian totally ordered set, then $X$ is in fact a compact ordinal. To see this, as we pointed out in the proof of
Theorem~\ref{thm: chain2}, $X$ is a chain. In addition, the topology on $X$ is the interval topology (see, \cite[Thm.~III.2.17]{Esa85}). Thus, it is sufficient to show that $X$ is artinian. If not,
then there is an infinite descending chain $x_0 > x_1 > \cdots$ in $X$. By Theorem~\ref{thm: chain2},  $X = Y$, so each $\down x_i$
is clopen. Therefore, $\down x_i \setminus \down x_{i+1}$ is a nonempty clopen set in $X$. Since $S$ is dense in $X$, there is
$s_i \in S$ with $s_i \in \down x_i \setminus \down x_{i+1}$. Because $X$ is a chain, $x_i \ge s_i > x_{i+1} \ge s_{i+1}$ for each $i$.
This implies that $s_0 > s_1 > \cdots$ is an infinite descending chain in $S$, which is impossible since $S$ is artinian.
Thus, $X$ is artinian, and hence is a compact ordinal.
\end{remark}

\begin{remark}
Theorem~\ref{thm: chain2} is not true in general. Let $S$ be an infinite antichain. Then $S$ is artinian. On the other hand, $X$ is
homeomorphic to the Stone-\v{C}ech compactification of $S$, and hence $Y=S\ne X$.
\end{remark}

\def\cprime{$'$}
\providecommand{\bysame}{\leavevmode\hbox to3em{\hrulefill}\thinspace}
\providecommand{\MR}{\relax\ifhmode\unskip\space\fi MR }
\providecommand{\MRhref}[2]{%
  \href{http://www.ams.org/mathscinet-getitem?mr=#1}{#2}
}
\providecommand{\href}[2]{#2}

\bigskip

Department of Mathematical Sciences, University of Texas at El Paso, El Paso, TX 79968, fco.avila.mat@gmail.com

Department of Mathematical Sciences, New Mexico State University, Las Cruces NM 88003, guram@nmsu.edu

Department of Mathematical Sciences, New Mexico State University, Las Cruces NM 88003, pmorandi@nmsu.edu

Departament of Mathematics, University of Guadalajara, Blvd.\ Marcelino Garc{\'i}a Barrag\'{a}n, 44430, Guadalajara, Jalisco, M\'{e}xico,
angelus31415@gmail.com

\end{document}